\theoremstyle{plain}
\newtheorem{theo}{Theorem }[section]
\newtheorem{prop}[theo]{Proposition}
\newtheorem{lemm}[theo]{Lemma}
\newtheorem{coro}[theo]{Corollary}
\theoremstyle{definition}
\newtheorem{remas}[theo]{Remarks}
\DeclareSymbolFont{pletters}{OT1}{cmr}{m}{sl}
\DeclareMathSymbol{s}{\mathalpha}{pletters}{`s}
\def\eps{\varepsilon}
\def\mez{\frac{1}{2}}
\def\xC{\mathbf{C}}
\def\xN{\mathbf{N}}
\def\xR{\mathbf{R}}
\numberwithin{equation}{section}
\title{Real analyticity of radiation patterns of resonant states on asymptotically hyperbolic manifolds}
\author{ Claude Zuily (*)}
  \address{Laboratoire de Math\'ematiques UMR 8628 du CNRS. Universit\'e Paris-Sud, B\^atiment 425, 91405 Orsay Cedex.}
   \address{Mail: claude Zuily@math.u-psud.fr}
 \thanks{ (*)  Supported in part by Agence Nationale de la Recherche
   project  ANA\'E ANR-13-BS01-0010-03.}
\begin{document} 
\maketitle
\begin{abstract}
We show that resonant states in scattering on  asymptotically hyperbolic manifolds that are analytic near conformal infinity, have analytic radiation patterns at infinity. On 
even  asymptotically hyperbolic manifolds we also show that smooth solutions of Vasy operators with analytic coefficients are also analytic. That answer a question of M.Zworski (\cite{Z2}  Conjecture 2). The proof is based on previous results of Baouendi-Goulaouic and Bolley-Camus-Hanouzet and for convenience of the reader we present an outline of the proof of the latter.
\end{abstract}
\section{Introduction and statement of the main results.}
  
In this note we consider the question of analyticity of suitably renormalized resonant states on  asymptotically hyperbolic manifolds. Referring to \cite{MM}, \cite{Gu}, \cite{V}, \cite{Z2} (section 3.1), \cite{DZ} (Chapter 5) and \cite{Z1} for detailled presentations and for issues of geometrical invariance, we briefly recall the set-up in the case where the metric is analytic near infinity.
\subsubsection{Radiation patterns}
Let $\overline{M}$ be a compact $n+1$ dimensional manifold with boundary $\partial M \neq \emptyset$ and let $M:= \overline{M} \setminus \partial M.$ We assume that $\overline{M}$ is a real analytic manifold near $\partial M$. The Riemanian manifold $(M,g)$ is said to be {\it asymptotically hyperbolic} and {\it analytic near infinity} if there exist functions $y' \in C^\infty(\overline{M}, \partial M)$ and $y_1 \in C^\infty(\overline{M}, (0,2))$ such that
\begin{align*}
&y_1 \arrowvert_{\partial M} = 0, \quad dy_1 \arrowvert_{\partial M}\neq 0,\\
&\overline{M}\supset  y_1^{-1}([0,1)) \ni m \mapsto (y_1(m),y'(m)) \in [0,1) \times \partial M
\end{align*}
is a real analytic diffeomorphism and near $\partial M$ the metric $g$ has the form
\begin{equation}\label{g}
g\arrowvert_{y_1 \leq \eps} = \frac{dy_1^2 + h(y_1)}{y_1^2}
\end{equation}
where $[0,1) \ni t \mapsto h(t)$ is an analytic family of real analytic Riemanian metrics on $\partial M.$

We recall now the following results of Mazzeo-Melrose \cite{MM} and Guillarmou \cite{Gu}. 

For $\text{Im }\lambda >0$, the operator  $R_g(\lambda) = \big(-\Delta_g - \lambda^2 - \big(\frac{n}{2}\big)^2 \big)^{-1}$  can be  defined from $ L^2(M)$ to $   H^2(M).$ Then 
\begin{equation}\label{mero}
\begin{aligned}
  R_g(\lambda) & :C^\infty_0(M)  \to C^\infty(M)\\ 
 &\text{ continues  to a meromorphic family of operators for }  \lambda \in \xC \setminus (-\frac{i}{2}\xN). 
 \end{aligned}
 \end{equation}

Moreover for $\lambda \in \xC \setminus (-\frac{i}{2}\xN),$
\begin{equation}\label{reson}
\begin{aligned}
u \in \Big(\oint R_g(\zeta)\, &d\zeta\Big) C^\infty_0(M), \quad (-\Delta_g - \lambda^2 - \big(\frac{n}{2}\big)^2 )u=0,\\
& \Longrightarrow F(y):= y_1^{i \lambda - \frac{n}{2}} u \in C^\infty(\overline{M}).
\end{aligned}
\end{equation}
Here the integral is over a small circle enclosing $\lambda$ and no other singularity of $R_g.$

The function $F$ (or $F\arrowvert_{\partial M}$) can be considered as the {\it radiation pattern} of the {\it resonant state} $u$.

In the analytic case it is natural to ask if the radiation patterns are 
{\em real analytic}. This is indeed the case as shown by the following result.   
\begin{theo}\label{t:ress}
Suppose that $ \overline M $ and $ g $ are {\em real analytic} near the
$\partial M $. Then, with the above terminology and for 
$ \lambda \in \xC \setminus (-\frac{i}{2}\xN)$, radiation patterns, $F$, of resonant states $ u $ are real analytic near $ \partial M $. 
\end{theo}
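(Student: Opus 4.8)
The plan is to conjugate away the singular prefactor relating $ u $ and $ F $, turning the resonance equation into a degenerate elliptic --- Fuchsian --- equation for $ F $ with real analytic coefficients whose characteristic set is exactly the boundary $ \{ y_1 = 0 \} $, and then to invoke the analytic hypoellipticity theorems of Baouendi--Goulaouic and of Bolley--Camus--Hanouzet for such equations. Since real analyticity is a local property, it suffices to prove that $ F $ is real analytic near an arbitrary point $ m_0 \in \partial M $. Working in the analytic coordinates $ (y_1,y') $ of the hypothesis, with $ y' $ in an analytic chart $ V $ of $ \partial M $, one has $ g = y_1^{-2}\big( dy_1^2 + h_{ij}(y_1,y')\,dy'^i dy'^j \big) $ on $ [0,\eps) \times V $, where $ (y_1,y') \mapsto h_{ij}(y_1,y') $ is real analytic and $ (h_{ij}) $ is positive definite. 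A direct computation gives, with $ h := \det(h_{ij}) $,
\begin{equation*}
\Delta_g = y_1^2\,\partial_{y_1}^2 + \Big( (1-n)\,y_1 + y_1^2\,\partial_{y_1}\log\sqrt{h}\,\Big)\partial_{y_1} + y_1^2\,\Delta_{h(y_1)},
\end{equation*}
where $ \Delta_{h(y_1)} $ denotes the Laplace--Beltrami operator of the metric $ h(y_1) $ on $ V $, a second order operator with real analytic coefficients depending analytically on the parameter $ y_1 $.

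Next I would substitute $ u = y_1^{\,n/2 - i\lambda} F $ into the equation $ (-\Delta_g - \lambda^2 - (n/2)^2)u = 0 $. Writing $ s := n/2 - i\lambda $ and $ a := \partial_{y_1}\log\sqrt{h} $ (real analytic), the contributions proportional to $ y_1^{\,s} F $ carry the coefficient $ s(s-n) + \lambda^2 + (n/2)^2 $, which vanishes identically, so they drop out; dividing what remains by $ y_1^{\,s+1} $ yields
\begin{equation*}
P F := y_1\big( \partial_{y_1}^2 + \Delta_{h(y_1)} \big)F + (1 - 2i\lambda)\,\partial_{y_1}F + y_1\,a\,\partial_{y_1}F + s\,a\,F = 0
\end{equation*}
on $ [0,\eps) \times V $. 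Here $ \partial_{y_1}^2 + \Delta_{h(y_1)} $ has real analytic coefficients and is elliptic in all $ n+1 $ variables, since its principal symbol $ -\xi_1^2 - \sum h^{ij}\xi_i\xi_j $ does not vanish for $ \xi \neq 0 $; hence $ P $ equals $ y_1 $ times a second order elliptic operator plus a first order operator, degenerating precisely on the characteristic hypersurface $ \{ y_1 = 0 \} $. By \eqref{reson} we already know $ F \in C^\infty(\overline M) $, so $ F $ is a solution of $ P F = 0 $ that is $ C^\infty $ up to this boundary.

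The role of the hypothesis $ \lambda \in \xC \setminus (-\tfrac{i}{2}\xN) $ is the indicial condition for $ P $. Differentiating $ P F = 0 $ a total of $ k-1 $ times in $ y_1 $ and restricting to $ \{ y_1 = 0 \} $, one sees that the coefficient of $ \partial_{y_1}^k F |_{y_1 = 0} $ equals $ k - 2i\lambda $, which is nonzero for every integer $ k \geq 1 $ exactly when $ \lambda \notin -\tfrac{i}{2}\xN $; equivalently, the indicial roots $ 0 $ and $ 2i\lambda $ of $ y_1\partial_{y_1}^2 + (1 - 2i\lambda)\partial_{y_1} $ do not differ by a positive integer. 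Thus $ PF = 0 $ is a non-characteristic Fuchsian boundary problem of exactly the type covered by the cited results of Baouendi--Goulaouic and Bolley--Camus--Hanouzet, whose theorem guarantees that any solution which is merely $ C^\infty $ up to $ \{ y_1 = 0 \} $ and satisfies $ PF \in C^\omega $ is in fact real analytic up to $ \{ y_1 = 0 \} $. Applied with $ PF = 0 $, this gives that $ F $ is real analytic on a neighbourhood of $ \{ y_1 = 0 \} \times V $; as $ m_0 \in \partial M $ was arbitrary, $ F $ is real analytic near $ \partial M $, which is the assertion.

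The reduction to $ P $ is routine bookkeeping. The main obstacle --- and the real content --- is to verify carefully that $ P $ satisfies all the hypotheses of the Baouendi--Goulaouic / Bolley--Camus--Hanouzet analytic hypoellipticity theorem (the matching powers of $ y_1 $ in the $ y_1\partial_{y_1}^2 $ and $ y_1\Delta_{h(y_1)} $ terms, real analyticity of all coefficients, and the non-resonance of the indicial exponents), and then to supply, or at least recall, the proof of that theorem; this is the reason an outline of the Bolley--Camus--Hanouzet argument is presented below.
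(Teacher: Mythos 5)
Your proof follows essentially the paper's route: conjugate by $y_1^{n/2-i\lambda}$, compute that the equation for $F$ becomes $y_1\{y_1(D_{y_1}^2-\Delta_{h(y_1)})+f(y)D_{y_1}+g(y)\}F=0$ with analytic coefficients, recognize the bracketed operator as a Fuchsian operator of the form \eqref{ex}, and apply Theorem~\ref{BC} via Remark~\ref{coroBC}(3). The reduction and the conclusion are correct.

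One inaccuracy is worth flagging. You present the hypothesis $\lambda\notin-\frac{i}{2}\xN$ as ``the indicial condition for $P$,'' compute that the coefficient of $\partial_{y_1}^kF|_{y_1=0}$ is $k-2i\lambda$, and assert this non\-vanishing is a hypothesis of the Baouendi--Goulaouic/Bolley--Camus--Hanouzet theorem. It is not: Theorem~\ref{BC} holds with \emph{no} condition on the indicial roots and no boundary data whatsoever --- this is precisely Remark~\ref{coroBC}(2), and it is the special feature of the Bolley--Camus--Hanouzet class (the degeneracy in $y_1$ at the boundary) that makes the application painless. The hypothesis $\lambda\notin-\frac{i}{2}\xN$ enters the argument only through \eqref{reson}, i.e.\ the Mazzeo--Melrose/Guillarmou input that $F\in C^\infty(\overline M)$; once $F$ is known to be smooth up to $\partial M$, Theorem~\ref{BC} upgrades it to $C^\omega$ with no further constraint on $\lambda$. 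Indeed your indicial-gap criterion ($2i\lambda\notin\xN^*$) does not exclude $\lambda=0$, another sign it is not the operative condition here. The indicial-root bookkeeping you carry out is exactly what is needed in the proof of Theorem~\ref{hea} (for the Vasy operator, which is hyperbolic in $x<0$, so one must pass through the boundary via the Cauchy--Kovalevska and Holmgren Theorems~\ref{CK} and~\ref{Holm}); it plays no role in Theorem~\ref{t:ress}.
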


\noindent
\begin{remas}  1. The result is local in the sense that analyticity needs to be 
assumed only at $ m \in \partial M $ with the corresponding local conclusion.

2. An equivalent conclusion would be to say that 
$ F |_{ \partial M} $ is real analytic. That is true in view of Theorems 0.2 and 0.3 which come from [1]. That restriction is what we would normally call the radiation pattern.
\end{remas}
\subsubsection{Vasy operators}
Motivated by analysis of the wave equation for Kerr--de Sitter black hole metrics Vasy [5] introduced a microlocal approach to the meromorphic continuation 
\eqref{mero}. The key component are {\em radial propagation estimates} first obtained by Melrose \cite{Me}.

Vasy's approach works for {\em even asymptotically hyperbolic metrics.} This means that in the notation of \eqref{g} the metric is given by 
\begin{equation}
\label{eq:gash1}   g|_{ y_1 \leq \epsilon }  = \frac{ dy_1^2 + h ( y_1^2 )  }{ y_1^2 } ,  \end{equation}
where 
 $ [ 0, 1 ) \ni t \mapsto h ( t ) $, 
is an analytic family of real analytic Riemannian metrics on $ \partial M $.
In that case 
\begin{equation}
\label{eq:firstconj} y_1^{ i \lambda - \frac n 2 }   ( - \Delta_g -
\lambda^2 - ({\textstyle{\frac n 2}})^2  ) y_1^{-i \lambda + \frac n 2 } =
x_1 P ( \lambda ) , \ \   x_1 = y_1^2 , \ \ x' = y',   , \end{equation}
where, near $ \partial M $,
\begin{equation}
\label{eq:Plag} P ( \lambda ) = 4 ( x_1 D_{x_1}^2 - ( \lambda + i ) D_{x_1} )
- \Delta_h + i \gamma ( x ) \left( 2 x_1 D_{x_1} - \lambda - i
{\textstyle\frac{ n-1} 2 } \right) . \end{equation}
This is the Vasy operator [5]. 

The operator \eqref{eq:Plag} can be considered locally as a special case of
the following class of operators:
\begin{equation}
\label{eq:Vasy}
L:= xD_x^2 -  ( \lambda +i ) D_x + \gamma (x ,y ) x D_x +
Q ( x, y , D_y ), \quad D = \frac{1}{i} \partial,  
\end{equation}
where 
\[  (-1,1) \ni x \mapsto Q ( x, y , D_y ) = \sum_{ |\alpha| \leq 2 } 
a_\alpha ( x, y )
D_y^\alpha ,  \ \ y \in U \subset \xR^n \]
is a an analytic family of  (positive) elliptic second order
differential operators with analytic coefficients.

\begin{theo}\label{hea}
Assume that the coefficients of $L$ are real analytic in a neighborhood of a point $m_0 =(0,y_0) \in (-1,1)\times \xR^n$ and that $\lambda \notin - i\xN^*.$
 
Then if $u$ is a $C^\infty$ function near  $m_0$ such that $Lu$ is real  analytic in a neighborhood of $m_0$ then $u$ itself is real  analytic near $m_0.$
\end{theo}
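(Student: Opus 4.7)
The plan is to separate the analysis at interior points from the analysis at the characteristic face $\{x=0\}$. When $m_0=(x_0,y_0)$ with $x_0\neq 0$, the coefficient of $D_x^2$ in $L$ is nonzero and $Q$ is elliptic in $y$, so $L$ is a classical second-order elliptic operator with analytic coefficients, and the conclusion reduces to the Morrey--Nirenberg theorem on analytic hypoellipticity. The essential case is therefore $m_0=(0,y_0)$, where the principal symbol of $L$ degenerates on the hypersurface $\{x=0\}$.

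To treat that case, I would rewrite $L$ using $D_x=-i\partial_x$ as
\begin{equation*}
L \,=\, -x\partial_x^2 + (i\lambda-1)\partial_x - i\gamma(x,y)\,x\partial_x + Q(x,y,D_y),
\end{equation*}
and observe that the coefficient of $\partial_x^2$ vanishes to order exactly one along $\{x=0\}$ while $Q(0,y,D_y)$ remains a positive second-order elliptic operator in $y$ with analytic coefficients. Thus $L$ is a degenerate elliptic operator of Fuchsian type transverse to the characteristic face, with analytic coefficients. Applying the Fuchsian principal part $-x\partial_x^2+(i\lambda-1)\partial_x$ to a monomial $u_k(y)x^k$ produces $k(i\lambda-k)u_k(y)x^{k-1}$, so the indicial polynomial is $P(k)=k(i\lambda-k)$. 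The hypothesis $\lambda\notin -i\xN^{*}$ is precisely the statement that $P(k)\neq 0$ for every positive integer $k$, and this guarantees that the Fuchsian recurrence determining the formal Taylor expansion of $u$ in $x$ is uniquely solvable at every level. The lower-order term $-i\gamma x\partial_x$ carries an extra power of $x$ compared to the Fuchsian principal part, so it does not alter the indicial polynomial and is absorbed as an analytic perturbation.

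With these two structural features identified, $L$ falls into the abstract framework of Baouendi--Goulaouic and of Bolley--Camus--Hanouzet on analytic hypoellipticity of degenerate elliptic operators with analytic coefficients: transverse Fuchsian degeneracy of order one on a smooth characteristic hypersurface, transverse second-order elliptic part with analytic coefficients, and nonresonant indicial polynomial. Their theorem asserts that every $C^\infty$ solution of $Lu=f$ with $f$ analytic near $m_0$ is analytic near $m_0$, including at points of the characteristic face. Invoking their statement in this form completes the proof; since the later sections of the paper reproduce an outline of the Bolley--Camus--Hanouzet theorem, the present argument reduces to a verification that the Vasy operator fits their hypotheses under the stated condition on $\lambda$.

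The main difficulty in this approach is not the reduction itself but lives inside the Bolley--Camus--Hanouzet theorem: it requires anisotropic analytic a priori estimates that couple the Fuchsian recurrence in $x$ (invertible precisely because $\lambda\notin -i\xN^{*}$) with the standard analytic elliptic estimates for $Q$ in $y$, summed via a majorant series to yield uniform factorial bounds on all derivatives of $u$ in a full neighborhood of $m_0$. For the theorem as stated here, once the Fuchsian/elliptic structure of $L$ has been read off and the indicial condition identified with $\lambda\notin -i\xN^{*}$, no further work is required beyond citing their result.
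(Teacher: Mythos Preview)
Your reduction has a genuine gap: the Vasy operator does \emph{not} fit the Bolley--Camus--Hanouzet framework as stated in the paper (Theorem~\ref{BC} and Remark~\ref{coroBC}.3). That result applies to operators of the form $t(D_t^2+R_2(t,y,D_y))+R_1$, i.e.\ \emph{all} second-order terms must carry the degenerating factor $t$. In $L=xD_x^2-(\lambda+i)D_x+\gamma xD_x+Q(x,y,D_y)$ the elliptic part $Q$ is \emph{not} multiplied by $x$; the principal symbol $x\xi^2+\sigma_2(Q)(x,y,\eta)$ does not vanish at $x=0$ when $\eta\neq 0$, so the degeneracy is of a different type than BCH handles. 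The paper's key device is the substitution $x=t^2$, $w(t,y)=u(t^2,y)$: after multiplying through by $t$ one obtains $\widetilde P w=tP_2w+P_1w$ with $P_2=\tfrac14 D_t^2+Q(t^2,y,D_y)$ genuinely elliptic, and only then does Theorem~\ref{BC} apply on $[0,1)\times V_{y_0}$. This yields analyticity of $w$, and Lemma~\ref{vtou} transfers this to analyticity of $u$ on $\{x\geq 0\}$, in particular of the trace $u(0,\cdot)$.

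Even granting analyticity on $\{x\geq 0\}$, you still owe the \emph{two-sided} conclusion near $m_0$, and this is where the indicial condition actually enters in the paper's argument---not in BCH (which needs no such condition; see Remark~\ref{coroBC}.2). With $u(0,\cdot)$ analytic, the Baouendi--Goulaouic Cauchy--Kovalevska theorem (Theorem~\ref{CK}) produces an analytic $v$ on a full neighborhood of $m_0$ with $Lv=Lu$ and $v(0,\cdot)=u(0,\cdot)$; this step requires $\mathcal C(\mu,y_0)=\mu(\mu-i\lambda)\neq 0$ for all integers $\mu\geq 1$, i.e.\ $\lambda\notin -i\xN^*$. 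One then shows inductively that $\partial_x^j(v-u)(0,\cdot)=0$ for all $j$ (again using $k+1-i\lambda\neq 0$), and the Baouendi--Goulaouic Holmgren theorem (Theorem~\ref{Holm}) forces $u=v$ near $m_0$. Your proposal collapses these three distinct ingredients---the $x=t^2$ reduction, the one-sided BCH estimate, and the BG Cauchy--Kovalevska/Holmgren extension---into a single black-box citation that does not exist in the form you describe.
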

\begin{remas}  
$(i)$ Notice that the operator $L$ is elliptic for $x>0$ and hyperbolic for $x<0.$

$(ii)$ When $\gamma =0$ and $Q= \Delta_M$ is the Laplace-Beltrami operator on a compact manifold  $M$ the analytic regularity has recently been proved by  Lebeau and  Zworski \cite{LZ}.

$(iii)$ Vasy's adaptation of Melrose's radial estimates shows that
to have $ u \in C^\infty$ we only need to assume that $ u \in H^{s+1} $ near $ m_0 $, where  $ s + {\textstyle{\frac12}} > - \text{Im }  \lambda,$
see \cite{Z1} \S 4, Remark 3. 
\end{remas}
  {\it Acknowledgements.}  Warm thanks are due to Maciej  Zworski for  providing informations on the mathematical background of this note, upon which this introduction was written  and for his great generosity.
\section{Proofs}
\subsection{Preliminaries}
In this section we recall some results by Baouendi-Goulaouic \cite{BG} and   Bolley-Camus-Hanouzet \cite{BCH}.

We begin by a "Cauchy-Kovalevska" type theorem which is a particular case of Theorem 1 in \cite{BG}.

Consider a "Fuchs type"  operator with analytic coefficients near $m_0=(0,y_0)$, of the form
\begin{equation}\label{BG1}
\mathcal{P} =x\partial_x^2 + a(y)\partial_x + \mathcal{Q}(x,y,\partial_y) + \sum_{\vert \beta \vert \leq 1} x \partial_x b_\beta(x,y)\partial_y^\beta 
\end{equation}
where $Q$ is a second order differential operator in $y$ (non necessarily elliptic). To this operator we associate the caracteristic equation,
\begin{equation}\label{cara}
\mathcal{C}(\mu,y) = \mu(\mu-1) + \mu a(y)= \mu(\mu-1+a(y)).
\end{equation}
Then we have the following result.
\begin{theo}\label{CK}
The following properties are equivalent.
\begin{align*}
  (i)  \quad &\text{For any integer } \mu \geq 1 \text{we have } \mathcal{C}(\mu,y_0) \neq 0.\\
 (ii) \quad &\text{For any analytic functions } v_0  \text{ near }   y_0  \text{ and    } f \text{ near } m_0 \text{ there exists a unique function } \\
& v \text{ which is analytic near } m_0  \text{ such that}
\end{align*}
$$\mathcal{P} v = f, \quad v(0, \cdot) = v_0.$$
 \end{theo}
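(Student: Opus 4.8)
The plan is to prove Theorem~\ref{CK} by reducing the Cauchy problem for $\mathcal{P}$ to a fixed-point / recursive construction on power series in $x$ with analytic coefficients in $y$, and to show that condition $(i)$ is exactly what makes the formal recursion solvable and the resulting series convergent (Cauchy--Kovalevskaya-type majorant argument).

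First I would establish $(i)\Rightarrow(ii)$. Look for the solution as a formal series $v(x,y)=\sum_{k\geq 0} v_k(y) x^k$, with $v_0$ prescribed by the initial data. Plugging this ansatz into $\mathcal{P}v=f$, expand $f=\sum_k f_k(y)x^k$ and collect the coefficient of $x^k$. The crucial structural point is that the operator is of Fuchs type: the term $x\partial_x^2$ contributes $k(k-1)v_k$ to the coefficient of $x^{k-1}$, i.e. $(k+1)k\,v_{k+1}$ at order $x^k$; similarly $a(y)\partial_x$ contributes $(k+1)a(y)v_{k+1}$; the lower-order terms $\mathcal{Q}(x,y,\partial_y)$ and $\sum_{|\beta|\le1}x\partial_x b_\beta \partial_y^\beta$ produce, at order $x^k$, only derivatives in $y$ of $v_0,\dots,v_k$ (note $x\partial_x$ kills the constant term and the extra factor of $x$ shifts indices favorably). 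Hence the recursion has the schematic form
\[
\mathcal{C}(k+1,y)\,v_{k+1}(y) = f_k(y) - \big(\text{differential operator in } y \text{ applied to } v_0,\dots,v_k\big),
\]
where $\mathcal{C}(\mu,y)=\mu(\mu-1+a(y))$ is the characteristic polynomial from \eqref{cara}. Under hypothesis $(i)$, $\mathcal{C}(k+1,y_0)\neq 0$ for all $k\geq 0$, so by analyticity $\mathcal{C}(k+1,y)$ is invertible as an analytic function near $y_0$, and each $v_{k+1}$ is determined uniquely as an analytic function near $y_0$. This gives existence and uniqueness of the formal series and, incidentally, uniqueness of the analytic solution in $(ii)$.

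The main obstacle — and the technical heart — is convergence of the series $\sum_k v_k(y)x^k$ near $m_0$. For this I would run a majorant argument in the Baouendi--Goulaouic spirit: choose a common majorant $\Phi(x,y)$ (a function of $\sum x + \sum(y_j-y_{0j})$ of the form $C\rho/(\rho - x - \sum(y_j-y_{0j}))$, or a Nagumo-type weight) for the analytic data $a$, $b_\beta$, the coefficients of $\mathcal{Q}$, $f$, and $v_0$. The delicate point compared with the classical Cauchy--Kovalevskaya theorem is that we divide by $\mathcal{C}(k+1,y)$ rather than by a single power of $k+1$; one must check that $|\mathcal{C}(k+1,y)^{-1}|$ grows at worst like $C/k$ uniformly for $y$ near $y_0$ — which holds precisely because $\mathcal{C}(k+1,y)=(k+1)(k+a(y))$ and, for $k$ large, $k+a(y)$ is bounded below uniformly (the finitely many small $k$ are handled by $(i)$ directly). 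With this estimate the standard majorant recursion closes: one shows $v_k$ is dominated by the Taylor coefficients of an explicit analytic function solving a scalar Fuchsian majorant equation, establishing a common radius of convergence. This yields the analytic solution $v$ near $m_0$ with $v(0,\cdot)=v_0$, and combined with the uniqueness already noted, proves $(ii)$.

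For the converse $(ii)\Rightarrow(i)$, I would argue by contraposition: if $\mathcal{C}(\mu_0,y_0)=0$ for some integer $\mu_0\geq 1$, then the homogeneous recursion is obstructed at step $k+1=\mu_0$. Concretely, take $f=0$ and $v_0=0$; one can then exhibit nonuniqueness — a nonzero formal (and, after the majorant argument applied to the truncated problem, analytic) solution can be started by taking $v_{\mu_0}$ to be any element of the kernel of $\mathcal{C}(\mu_0,y_0)$ acting at $y_0$, or more carefully one produces two distinct analytic solutions of $\mathcal{P}v=f$, $v(0,\cdot)=v_0$, contradicting the uniqueness asserted in $(ii)$. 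Alternatively, and perhaps more cleanly, one shows the existence part of $(ii)$ fails: for a suitably chosen analytic $f$ the compatibility condition at order $x^{\mu_0-1}$ (namely that the right-hand side lie in the range of $\mathcal{C}(\mu_0,\cdot)$) cannot be met, so no analytic $v$ exists. Either route establishes $\neg(i)\Rightarrow\neg(ii)$, completing the equivalence. I expect this direction to be short relative to the convergence estimate in the forward direction, which remains the real work.
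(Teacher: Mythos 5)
The paper itself offers no proof of Theorem \ref{CK}: it is quoted as a particular case of Theorem 1 of Baouendi--Goulaouic \cite{BG}, so there is no internal argument to compare against. Your sketch essentially reconstructs the classical Baouendi--Goulaouic proof (Fuchsian power-series recursion in $x$ plus a majorant estimate), and its architecture is sound: $x\partial_x^2+a(y)\partial_x$ applied to $x^{k+1}v_{k+1}(y)$ produces $\mathcal{C}(k+1,y)v_{k+1}x^{k}$, the terms $\mathcal{Q}$ and $x\partial_x b_\beta\partial_y^\beta$ contribute at order $x^k$ only $y$-derivatives of $v_0,\dots,v_k$, and condition $(i)$ is exactly what allows division at every step, giving formal existence and uniqueness; convergence then requires a majorant argument, as you say.

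Two points in your write-up need repair, though neither is fatal. First, the bound you announce for the division, $|\mathcal{C}(k+1,y)^{-1}|\le C/k$, is not sufficient as stated: since $\mathcal{Q}$ is second order in $y$, each step of the majorant recursion costs two $y$-derivatives, and you need the quadratic decay $|\mathcal{C}(k+1,y)^{-1}|\le C/k^{2}$ uniformly near $y_0$ to close the estimate. Fortunately this is exactly what the factorization $\mathcal{C}(k+1,y)=(k+1)(k+a(y))$ gives, because $|k+a(y)|\ge ck$ for $k$ large uniformly on a fixed neighborhood of $y_0$, while the finitely many remaining $k$ are handled by $(i)$ and continuity; so the slip is in the statement of the required bound, not in its availability. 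Second, in the converse your first route fails: $\mathcal{C}(\mu_0,\cdot)$ is a scalar multiplier which in general vanishes only at the point $y_0$, so in the homogeneous problem the equation $\mathcal{C}(\mu_0,y)v_{\mu_0}(y)=0$ forces $v_{\mu_0}\equiv 0$ by analyticity; there is no ``kernel'' to seed and no nonuniqueness produced this way (that mechanism only operates when $a\equiv 1-\mu_0$ identically near $y_0$). Your alternative route is the correct one and should be the proof: take $\mu_0\ge 1$ minimal with $\mathcal{C}(\mu_0,y_0)=0$, take $v_0=0$ and $f=x^{\mu_0-1}$; the recursion forces $v_k\equiv 0$ near $y_0$ for $k<\mu_0$, and at order $x^{\mu_0-1}$ it demands $\mathcal{C}(\mu_0,y)v_{\mu_0}(y)=1$, which is impossible at $y=y_0$, so the existence part of $(ii)$ fails. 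With these two adjustments your proposal is a correct proof along the lines of \cite{BG}.
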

The second result is a "Holmgren type" theorem which is also a particular case of Theorem 2 of \cite{BG}.
\begin{theo}\label{Holm}
Let $\mathcal{P}$ be defined by \eqref{BG1} and $h\in \xN.$ 
Assume that the roots of the equation $\mathcal{C}(\mu, y_0) =0$ satisfy 
$$\text{ Re } \mu_k <1 + h, \quad k=1,2.$$
Then  any function $U$ of class $C^{1+h}$ in $(x,y)$ near $m_0$  satisfying 
$$ \mathcal{P} U = 0, \quad \partial_x^jU(0, \cdot) =0, \quad \text{for } 0\leq j \leq h$$
vanishes identically near $m_0$.
\end{theo}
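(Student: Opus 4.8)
The plan is to treat this as a Holmgren-type uniqueness theorem at the hypersurface $S=\{x=0\}$, which carries the vanishing Cauchy data of $U$ but is \emph{characteristic} for $\mathcal{P}$: near $S$ the operator is of Fuchs type, and I would run the classical duality argument of Holmgren --- solve an adjoint problem and test $\mathcal{P}U=0$ against it --- the one new feature being that the adjoint solution has to be controlled at the characteristic face, which the Fuchsian structure (the theory behind Theorem \ref{CK}) makes possible. The hypothesis $\operatorname{Re}\mu_k<1+h$ is the natural one: since $\partial_x^jU(0,\cdot)=0$ for $0\le j\le h$ and $U\in C^{1+h}$, one may write $U=x^{h+1}V$ with $V$ bounded up to $S$, and then $\mathcal{P}U=0$ becomes a Fuchsian equation for $V$ whose characteristic exponents $\mu_k-(h+1)$ have, by the hypothesis, negative real part --- so a bounded $V$ can carry no Frobenius branch $x^{\mu_k-(h+1)}(1+o(1))$ and must vanish. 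The theorem is the rigorous form of this one--dimensional heuristic in the presence of the variables $y$, and the duality argument supplies it.

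I would first compute the formal transpose. Using $(x\partial_x^2)^{t}=x\partial_x^2+2\partial_x$, $(a(y)\partial_x)^{t}=-a(y)\partial_x$ and transposing the $\partial_y$-operators, $\mathcal{P}^{t}$ is again of the form \eqref{BG1} with $a(y)$ replaced by $2-a(y)$; its characteristic polynomial \eqref{cara} is therefore $\mu(\mu+1-a(y))$, with roots $0$ and $a(y_0)-1=-\mu_2$ at $y_0$ (I label $\mu_1=0$, $\mu_2=1-a(y_0)$; recall $0$ is always a root of \eqref{cara}). Then I would work in a thin ``lens'' $\Omega$ near $m_0$, bounded below by a piece of $S$ and above by a shallow dome $\Sigma$ lying in $\{x>0\}$ except along its rim, where it touches $S$; for elliptic $\mathcal{Q}$ --- the case needed in Theorem \ref{hea} --- any such dome is non-characteristic for $\mathcal{P}$, hence for $\mathcal{P}^{t}$ (in general $\Sigma$ must be adapted to the characteristics of $\mathcal{P}$). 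Given $\psi$ analytic near $\overline{\Omega}$, I would solve $\mathcal{P}^{t}\phi=\psi$ with \emph{zero} Cauchy data on $\Sigma$ by Cauchy--Kovalevskaya, and apply Green's formula on $\Omega\cap\{x>\varepsilon\}$:
\[
\int_{\Omega\cap\{x>\varepsilon\}}U\,\psi \;=\; \int_{\Omega\cap\{x>\varepsilon\}}(\mathcal{P}U)\,\phi \;-\;B_\Sigma(U,\phi)\;-\;B_\varepsilon(U,\phi),
\]
where the first term on the right vanishes since $\mathcal{P}U=0$, and $B_\Sigma=0$ since $\phi$ and its first-order derivatives vanish on $\Sigma$.

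The decisive step is that $B_\varepsilon\to0$ as $\varepsilon\downarrow0$. As a solution of a Fuchsian equation with analytic right-hand side, $\phi$ has near $S$ an asymptotic expansion in the two powers $x^{0}$ and $x^{a(y)-1}$ with smooth coefficients, locally uniformly in $y$, so that $\phi$ and $x\partial_x\phi$ are $O(x^{-\max(0,\operatorname{Re}\mu_2)-\delta})$ for every $\delta>0$ (the $\delta$ absorbing possible logarithms in the resonant case); on the other hand $\partial_x^jU(0,\cdot)=0$ for $0\le j\le h$ with $U\in C^{1+h}$ gives $U=O(x^{h+1})$ and $\partial_xU=O(x^{h})$. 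Since $B_\varepsilon(U,\phi)$ is an integral over $\{x=\varepsilon\}\cap\Omega$ of products of $\phi$ or $x\partial_x\phi$ with $U$ or a first derivative of $U$, it is $O(\varepsilon^{\,1+h-\max(0,\operatorname{Re}\mu_2)-\delta})$; by the hypothesis $\operatorname{Re}\mu_k<1+h$ the exponent $1+h-\max(0,\operatorname{Re}\mu_2)$ is strictly positive, so for $\delta$ small $B_\varepsilon\to0$ and hence $\int_\Omega U\,\psi=0$. As polynomials $\psi$ are dense, $U\equiv0$ on $\Omega$.

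Finally, as in Holmgren's theorem, I would let the dome $\Sigma$ sweep out a full one-sided neighbourhood of $m_0$ in $\{x\ge0\}$ by the standard bending and connectedness argument, and treat $\{x\le0\}$ the same way (more easily, since $\mathcal{P}$ is hyperbolic there), obtaining $U\equiv0$ near $m_0$. The main obstacle --- as always for Holmgren-type results --- is the uniformity needed here: the neighbourhood of $\Sigma$ on which $\phi$ exists analytically, and the constants in the growth estimates for $\phi$ near $S$, must remain uniform as $\psi$ ranges over the relevant compact families (of polynomials, say). This is precisely where the quantitative Cauchy--Kovalevskaya estimates, with explicit dependence on the data and the coefficients, together with the quantitative Fuchsian control of $\phi$ at the characteristic face, are needed --- and where the real work lies.
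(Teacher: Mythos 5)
The paper itself gives no proof of Theorem \ref{Holm}: it is quoted as a particular case of Theorem 2 of \cite{BG}, so your attempt can only be measured against that reference. Your overall strategy --- Holmgren duality adapted to the Fuchsian face $\{x=0\}$ --- is the right philosophy and is in the spirit of Baouendi--Goulaouic; your computation of the transpose (again of the form \eqref{BG1}, with $a$ replaced by $2-a$, indicial roots $0$ and $-\mu_2$) is correct, and your exponent bookkeeping locates exactly where the hypothesis $\RE \mu_k<1+h$ enters: $U=O(x^{h+1})$, $\partial_xU=O(x^{h})$ paired against an adjoint solution of size $O(x^{-\max(0,\RE\mu_2)-\delta})$ makes the boundary term at $x=\varepsilon$ tend to zero.

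However, the decisive step is asserted, not proved, and it is not something Cauchy--Kovalevskaya gives you. You need $\phi$ solving ${}^t\mathcal{P}\phi=\psi$ with zero Cauchy data on the dome $\Sigma$, defined on the \emph{whole} lens down to the characteristic face, and carrying there a two-exponent (Frobenius-type) expansion in $x^{0}$ and $x^{a(y)-1}$ with bounds uniform in $y$ and in $\psi$. Cauchy--Kovalevskaya produces $\phi$ only in a small neighbourhood of $\Sigma$, and that neighbourhood degenerates at the rim: where $\Sigma$ touches $\{x=0\}$ tangentially its conormal is $dx$ and the principal symbol, which carries the factor $x$, vanishes, so $\Sigma$ is characteristic there and even local solvability fails exactly where you need it (your parenthetical claim that for elliptic $\mathcal{Q}$ any such dome is non-characteristic is false at the rim). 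Moreover, analytic solutions of PDEs do not continue to larger domains merely because the coefficients are analytic there, and the conormal expansion of solutions of a Fuchsian \emph{partial} differential equation near $x=0$ is not the ODE Frobenius picture but a structure theorem of essentially the same depth as Theorems \ref{CK} and \ref{Holm} themselves --- it is precisely what the Fuchsian theory of \cite{BG}, applied to the transposed problem with the $h$-shifted indicial roots and with quantitative dependence on the data, has to supply. Together with the uniformity over the family of $\psi$ that you explicitly defer (``where the real work lies'') and the unaddressed non-elliptic case of $\mathcal{Q}$ (the theorem assumes no ellipticity, and then your non-characteristic domes need not exist at all), this means your text is a plausible program whose hard core --- the content actually imported from \cite{BG} --- is missing.
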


In the sequel we shall denote by $C^\omega$ the space of analytic functions.
 
Let us recall a particular case of a   result by Bolley-Camus-Hanouzet \cite{BCH}.  \begin{theo}\label{BC} Let $y_0 \in \xR^n$ and    $P_1$ (resp.  $P_2$) be a differential operator  of order $1$ (resp. $2$) with analytic coefficients on $[0,1)\times V_{y_0},$
 \begin{equation}\label{L}
 P_1 = \sum_{\vert \alpha \vert + k \leq 1} a_{\alpha,k}(t,y)D_t^k D_y^\alpha, \quad P_2 = \sum_{\vert \alpha \vert + k =2} b_{\alpha,k}(t,y)D_t^k D_y^\alpha.
 \end{equation}
Assume that $  P_2$   is uniformly  elliptic in  $[0,1)\times V_{y_0}$. Define $L$   by 
$$Lu= P_2(tu) + P_1u.$$
 Then 
 \begin{equation}\label{HEA}
  u\in  C^\infty([0,1)\times V_{y_0}), \,\,     Lu \in  C^\omega([0,1)\times V_{y_0})   \Longrightarrow   u \in C^\omega([0,1)\times V_{y_0}) .
  \end{equation}
    \end{theo}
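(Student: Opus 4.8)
The plan is to treat $L$ as a Fuchsian operator along $\{t=0\}$ and to combine the Baouendi--Goulaouic theorems above with an analytic a~priori estimate near that boundary. Using $D_t^2(tu)=tD_t^2u+\tfrac2iD_tu$ and $D_tD_y^\alpha(tu)=tD_tD_y^\alpha u+\tfrac1iD_y^\alpha u$ one sees that $P_2(tu)=tP_2u+(\text{first order})$, hence $L=tP_2+\widetilde P_1$ with $\widetilde P_1$ of order $1$; in particular the principal symbol of $L$ is $t\,\sigma_2(P_2)$, elliptic for $t>0$. Since $P_2$ is elliptic, its coefficient of $D_t^2$ does not vanish near $m_0$, and dividing by it puts $L$ into the Fuchs form \eqref{BG1}, with characteristic equation $\mathcal C(\mu,y)=\mu(\mu-1+a(y))$ whose roots are $0$ and $1-a(y_0)$. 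Assuming the non-resonance $\mathcal C(\cdot,y_0)\neq0$ on $\xN^*$ (the analogue of the hypothesis $\lambda\notin-i\xN^*$ of Theorem~\ref{hea}), both Theorem~\ref{CK} and Theorem~\ref{Holm} (the latter with $h$ large enough that $\RE(1-a(y_0))<1+h$) become available. Finally, by the classical analytic elliptic regularity theorem, $u$ is already real analytic on $\{t>0\}$ near $y_0$, so the issue is analyticity \emph{up to} $t=0$.

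\textbf{Reduction to the boundary trace.} Because $L$ is Fuchsian and $u\in C^\infty$ up to $t=0$, its Taylor coefficients $u_j(y):=\tfrac1{j!}\partial_t^ju(0,y)$ are forced, order by order, by a recursion of the shape $\mathcal C(j+1,y)u_{j+1}=\Lambda_j(u_0,\dots,u_j)+(\text{$j$-th Taylor coefficient of }Lu)$, with $\Lambda_j$ built from the analytic coefficients of $L$; the non-resonance $\mathcal C(j+1,y_0)\neq0$ makes this solvable, so each $u_j$ is determined by $u_0=u(0,\cdot)$ together with the analytic function $Lu$. Consequently, once $u_0\in C^\omega$ is known, feeding $u_0$ and $Lu$ into Theorem~\ref{CK} yields an analytic $\widetilde u$ with $L\widetilde u=Lu$, $\widetilde u(0,\cdot)=u_0$; the difference $w:=u-\widetilde u$ is smooth, satisfies $Lw=0$, and, running the same recursion from $w_0=0$, vanishes to infinite order at $t=0$, so Theorem~\ref{Holm} gives $w\equiv0$, i.e.\ $u=\widetilde u\in C^\omega$. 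Everything therefore reduces to proving that the single trace $u_0$ is analytic, equivalently to factorial bounds $\|\partial_y^\beta u(0,\cdot)\|_{L^2}\le C^{|\beta|+1}|\beta|!$.

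\textbf{The analytic a~priori estimate (the crux).} This is the substantive step, taken from \cite{BCH}, and the place where I expect the genuine difficulty, since $L$ degenerates at $t=0$ and everything must be recovered from the ellipticity of $P_2$ through the mildness of that degeneration. One starts from a basic weighted energy estimate on a small half-box $\{0\le t<\delta,\ |y-y_0|<\delta\}$, obtained by pairing $Lu=f$ against suitable test functions built from $u$, $\partial_t u$ and $t\partial_t u$ and integrating by parts: the Fuchsian part $tD_t^2+a(y)D_t$ and G\aa rding's inequality for the elliptic $P_2$ control weighted norms such as $\|t^{1/2}\partial_t u\|$, $\|t^{1/2}\nabla_y u\|$ and $\|\partial_t u\|$ by $\|f\|+\|u\|$. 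One then differentiates the equation with the tangential fields $\partial_{y_i}$ and the scale-invariant Fuchsian field $t\partial_t$ (not $\partial_t$), whose commutators with $L$ are again first order with analytic coefficients, and runs an induction on the total order $N=|\beta|+k$: applying the basic estimate to $\partial_y^\beta(t\partial_t)^ku$, absorbing the commutator terms through the induction hypothesis (here the analyticity of the coefficients enters quantitatively), and using the non-vanishing of $\mathcal C(\cdot,y_0)$ on $\xN^*$ to invert the indicial part whenever several pure $t\partial_t$ factors are stacked, one obtains $\|\partial_y^\beta(t\partial_t)^ku\|\le C^{N+1}N!$ on a fixed smaller half-box. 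Restricting to $t=0$ turns these into the sought factorial bounds on $\partial_y^\beta u_0$, so $u_0\in C^\omega$, and the reduction above finishes the proof. The truly delicate point is this last induction: maintaining control of the weights $t^{1/2}$, working with $t\partial_t$ in place of $\partial_t$, and above all preventing the constant $C$ from deteriorating at each order — which is exactly where the structure of $\mathcal C(\cdot,y_0)$ is used.
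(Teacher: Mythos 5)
There is a genuine gap: your proof smuggles in a non-resonance hypothesis that Theorem~\ref{BC} does not make. You write ``Assuming the non-resonance $\mathcal C(\cdot,y_0)\neq0$ on $\xN^*$,'' but Theorem~\ref{BC} is stated for \emph{arbitrary} analytic $P_1,P_2$ with $P_2$ elliptic; no condition on the indicial roots is imposed, and indeed Remark~\ref{coroBC}.2 emphasizes that analyticity holds with no boundary condition whatsoever, precisely because of the degeneracy at $t=0$ and the special form $L=P_2(t\cdot)+P_1$. Your reduction to the trace $u_0=u(0,\cdot)$ hinges entirely on Theorems~\ref{CK} and~\ref{Holm}, both of which require that very non-resonance; so, as written, you prove a strictly weaker statement. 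You cannot drop the extra hypothesis without changing strategy, because if some $\mathcal C(k,y_0)=0$ then the Taylor recursion you invoke is not invertible at order $k$ and Theorem~\ref{CK} provides no analytic reference solution $\widetilde u$ to compare against.

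This also inverts the logical structure of the paper. It is Theorem~\ref{hea} (the Vasy-operator case, where $\lambda\notin -i\xN^*$ \emph{is} assumed) whose proof runs exactly the way you describe: reduce to the trace, invoke Theorem~\ref{CK} to build $v$ with the right trace, then kill $u-v$ with Theorem~\ref{Holm}. And the analyticity of the trace there is \emph{derived from} Theorem~\ref{BC} via the substitution $x=t^2$. Using the \ref{hea}-style reduction to prove \ref{BC} therefore either presupposes what is to be proved or imports the unwanted hypothesis.

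The paper's actual proof of Theorem~\ref{BC} avoids the trace reduction entirely. It proves analyticity of $tu$ (hence of $u$ via Hardy's inequality, Lemma~\ref{Hardy}) on a full half-box, using: (i) the a~priori estimate
$\|tu\|_{H^{p+2}}\le C_p\{\|Lu\|_{H^p}+\|tu\|_{H^{p+1}}\}$ from Bolley--Camus--Hanouzet, which holds with \emph{no} boundary condition once $p$ is large enough (the index $\chi_p=1-r_p$ vanishes because the single indicial root eventually has real part $>-p-\tfrac32$); and (ii) the Morrey--Nirenberg nested-open-sets scheme to promote this to factorial bounds on all $D_t^\ell D_y^\alpha(tu)$, first for $\ell\le p+2$ and then, by feeding the equation $D_t^2(tu)=Lu-\dots$ back in, for all $\ell$. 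Your sketch of an energy/commutator argument with $\partial_{y_i}$ and $t\partial_t$ is not the same mechanism, and it also has a separate weakness: for $k\ge1$ the function $(t\partial_t)^ku$ vanishes on $\{t=0\}$, so ``restricting to $t=0$'' your bounds $\|\partial_y^\beta(t\partial_t)^ku\|\le C^{N+1}N!$ only carries information from the $k=0$ terms, and converting control of $(t\partial_t)^k$ into control of $\partial_t^\ell$ near $t=0$ (which one needs for analyticity in $t$, not just in $y$) is far from automatic. In short: drop the non-resonance hypothesis, and instead aim for the boundary-free a~priori estimate for $tu$ and a Morrey--Nirenberg iteration, as the paper does.
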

 For the reader's convenience we shall sketch briefly the proof of this theorem at the end of this note.
 \begin{remas}\label{coroBC}
 
 1.  A previous result of this type is due to Baouendi-Goulaouic \cite{BG2}.

2. Notice that \eqref{HEA} holds without condition on the boundary data.  This fact is due to the degeneracy on the operator $L$ on the boundary $t=0$ and to the particular form of $L.$

3. It follows   that an operator (with analytic coefficients) of the form
 \begin{equation}\label{ex}
 L= t\big( D_t^2 + R_2(t,y,D_y) \big) + R_1(t,y,D_t,D_y)
\end{equation}
where $R_j$ is of order $j$ and $\sigma(R_2)(t,y, \eta) \geq c\vert \eta \vert^2$ satisfies \eqref{HEA}.
\end{remas}
 \subsection{Proof of Theorem \ref{t:ress}}

Using the special form of the metric \eqref{g} we compute (in the coordinates valid near the boundary),
\begin{gather}
\label{eq:Deltagg} 
\begin{gathered} - \Delta_g = ( y_1 D_{y_1} )^2 + 
i ( n  + y_1 \gamma_0 ( y_1, y') ) y_1
D_{y_1} - y_1^2 \Delta_{h (y_1 ) } , \\
\gamma_0 ( t, y') := - {\textstyle{\frac12}} \partial_t \bar h ( t ) / \bar h ( t ) , \ \
\bar h ( t ) := \det h ( t ) , \ \ D := \textstyle{\frac 1 i } \partial .
\end{gathered}
\end{gather} 

To reduce the presentation to a special case of the results by Bolley-Camus-Hanouzet, 
we perform the conjugation which transforms an equation for $ u $ into an equation for $ F$ (in the notation of \eqref{reson}). We can write
\begin{equation}\label{eq:conj1}
\begin{aligned} 
  0 &= y_1^{ i \lambda - \frac n 2 }   ( - \Delta_g -
\lambda^2 - ({\textstyle{\frac n 2}})^2)u = y_1^{ i \lambda - \frac n 2 }   ( - \Delta_g -
\lambda^2 - ({\textstyle{\frac n 2}})^2  ) y_1^{-i \lambda + \frac n 2 }F \\
&  = 
\big\{(y_1 D_{y_1} )^2 + ( i n + i y_1 \gamma_0 - 2 \lambda) y_1 D_{y_1} 
+ y_1 \gamma_0 ( {\textstyle{\frac12}} n - i \lambda ) - y_1^2 \Delta_{h(y_1)}\big\}F \\
& = y_1 \big\{ y_1 ( D_{y_1}^2 - \Delta_{ h( y_1)} ) + f(y) D_{y_1} + g ( y) \big\}F,
\end{aligned}
\end{equation}
where $ f $ and $ g $ are analytic functions. The operators in brackets on the 
right hand side is precisely an operator  of the form \eqref{ex}. We can therefore apply the point 3. in  Remark \ref{coroBC} to conclude.

 \subsection{Proof of Theorem \ref{hea}}
Let  $L$ be the   operator defined by \eqref{eq:Vasy}. Then
$$-L = x \partial_x^2 + (1-i\lambda)\partial_x +i \gamma(x,y) x \partial_x - Q(x,y, D_y).$$
 It is  of the form   \eqref{BG1} with $a_1(y) = 1-i\lambda.$ Therefore $\mathcal{C}( \mu, y_0) = \mu (\mu-i\lambda).$ Since by hypothesis we have $ i\lambda \notin  \xN^*$ we see that the condition    in Theorem \ref{CK} is satisfied.
 
Let $u$ be the $C^\infty$ solution in Theorem \ref{hea}. Assume that we can prove that $u(0, \cdot)$ is analytic near $y_0$. Let $v$ be the analytic solution of the problem
$$ Lv= f, \quad v(0,\cdot) = u(0, \cdot)$$
given by Theorem \ref{CK}. Setting $U = v-u$ we see that $U$ is $C^\infty$ and satisfy
\begin{equation}\label{eq:U}
 \big[x \partial_x^2 + (1-i\lambda)\partial_x +i \gamma(x,y) x \partial_x - Q(x,y, D_y)\big]U = 0, \quad U(0, \cdot) =0.
 \end{equation}
We claim that, when $i\lambda \notin \xN^*,$ we have
\begin{equation}\label{traces}
 \partial_x^jU(0, \cdot) = 0, \text{ for all } j \in \xN.
 \end{equation}
This is true for $j=0$. Assume this is true for $0 \leq j \leq k.$ We differentiate $k$ times the equation in \eqref{eq:U} with respect to $x$ and and we take the trace on $x=0$. Using the induction we see that $(k+1 -i \lambda)\partial_x^{k+1}U(0, \cdot) =0$ which ends the induction.

Now the roots of the equation $\mathcal{C}(\mu, y_0) =0$ are $\mu_1 =0, \mu_2 = i\lambda.$ Taking $h\in \xN$ such that $1+h> \text{Re }\mu_2 = - \text{Im }\lambda$ we see that the condition in Theorem \ref{Holm} is satisfied. Using then \eqref{traces} we see that Theorem \ref{Holm} implies that  $U=0.$ Thus $u = v$ and $u$ is analytic near $m_0$ which completes the proof. Therefore we are left with the proof of the following claim.
\begin{center}
"Under the conditions of Theorem \ref{hea} $u(0,\cdot)$ is analytic near $y_0.$"
\end{center}
Consider now an  operator on $[0, 1)\times \xR^n$ with analytic coefficients     of the form
$$P = xD_x^2 +c(x,y)D_x  +d(x,y) + Q(x,y,D_y)$$
where $\sigma_2(Q)(x,y,\eta) \geq c \vert \eta \vert^2.$

The goal is to prove that  if $u\in C^\infty$ on $[0, 1)\times V_{y_0}$  and $Pu=f$ is analytic on $[0, 1)\times V_{y_0}$ then $u$ is analytic in $[0, 1)\times V_{y_0}$. In particular $u(0,\cdot)$ is analytic.

One can reverse the calculation \eqref{eq:firstconj} and \eqref{eq:conj1}
to see that, with $x=t^2,  $ if $u$ is a solution in $[0,1)$ of the equation $Pu=f$ then $w$ defined by $w(t,y) = u(t^2,y)$ satisfy an equation on $[0,1) \times V_{y_0}$ of the form $\widetilde{P} w =g$ where
$$ g(t,y) = t f(t^2,y) \quad \text{and} \quad \widetilde{P} w  = t P_2 ( t, y, D_t, D_y ) + P_1 (t, y,  D_t , D_y) ,$$ 
where $ P_2 $ is an elliptic operator of order two and $ P_1 $ is a first order
operator -- both with analytic coefficients. Using the point 3. in Remarks \ref{coroBC} we see that $w$ is analytic in $ [0,1)\times V_{y_0}.$ 



To prove that $u(0, \cdot)$ is analytic  we use the following lemma.
 
Let $u:[0, 1) \times V_{y_0} \to \xC$ be  a $C^\infty$ function. Set
\begin{equation}\label{defv}
 w(t,y) = u(t^2,y) \quad0 \leq t  \leq 1, \quad y \in V_{y_0}.
 \end{equation}
\begin{lemm}\label{vtou}
Assume that there exist    positive constants  $A,B_1,B_2$ such that for all  $\alpha \in \xN^n, j \in \xN$ we have
\begin{equation}\label{est:v}
  \vert \partial_y^\alpha \partial_t^j w(t,y)\vert \leq A B_1^{\vert \alpha \vert} B_2^j \alpha! j!, \quad 0 \leq t   \leq 1, \quad   y \in V_{y_0}.
 \end{equation}
Then 
\begin{equation}\label{est:u}
 \vert \partial_y^\alpha \partial_x^j u(x,y)\vert \leq A B_1^{\vert \alpha \vert} B_2^{2j}\alpha! j!, \quad 0 \leq x \leq 1, \quad \vert y \in V_{y_0}.
 \end{equation}
\end{lemm}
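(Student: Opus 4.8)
The plan is to deduce the estimates \eqref{est:u} for $u$ directly from the hypothesized Gevrey-type bounds \eqref{est:v} on $w(t,y)=u(t^2,y)$ by expressing the $x$-derivatives of $u$ in terms of the $t$-derivatives of $w$. The point is that $u$ is already known to be $C^\infty$ on $[0,1)\times V_{y_0}$, so the relation $w(t,y)=u(t^2,y)$ may be differentiated freely; the content of the lemma is purely the bookkeeping that converts a bound with $B_2^j j!$ in the $t$-variable into a bound with $B_2^{2j} j!$ in the $x$-variable.

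First I would fix $y$ and treat everything as a statement about functions of one real variable, differentiating in the $y$-variables only at the very end (the $\partial_y^\alpha$ commute through and the factor $AB_1^{|\alpha|}\alpha!$ simply comes along for the ride, so it suffices to prove the case $\alpha=0$ and then apply the same argument to $\partial_y^\alpha u$). For $t>0$ one has $x=t^2$, hence $\frac{d}{dt}=2t\frac{d}{dx}$, i.e. $\partial_x = \frac{1}{2t}\partial_t$ acting on functions of $t$. Iterating, $\partial_x^j u(t^2,y) = \bigl(\frac{1}{2t}\partial_t\bigr)^j w(t,y)$, and expanding this operator gives $\partial_x^j u(x,y) = \sum_{\ell} c_{j,\ell}\, t^{-2j+\ell}\,\partial_t^\ell w(t,y)$ for suitable combinatorial coefficients $c_{j,\ell}$ with $\ell$ ranging between $j$ and $2j$ (roughly: each $\partial_t$ either hits $w$ or hits a power of $t^{-1}$, and one needs exactly $2j$ net applications of $\partial_t$ to cancel the singular powers of $t$ at the end). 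The key combinatorial fact is the identity for the "half-derivative substitution": one has an explicit formula $\partial_x^j = \sum_{\ell=j}^{2j} a_{j\ell}\, t^{\ell-2j}\partial_t^\ell$ with $\sum_\ell |a_{j\ell}| 2^{2j-\ell} \le C^j j!/(\text{something})$, and more to the point, evaluating at $t$ and using \eqref{est:v} on each term $|\partial_t^\ell w|\le A B_1^{|\alpha|}B_2^\ell \alpha!\,\ell!$ together with $t^{\ell-2j}\le 1$ would be false for $t$ small — so the right move is instead to let $t\to 0^+$ and use Taylor expansion of $w(\cdot,y)$ at $t=0$, discarding the odd-order terms which must vanish.

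So the cleaner route, which I would actually carry out, is: from \eqref{est:v} the function $t\mapsto w(t,y)$ extends to an even real-analytic function of $t$ on a fixed interval $(-r,r)$ with $r\gtrsim 1/B_2$ (Gevrey-$1$ in $t$ is analyticity), its odd Taylor coefficients at $0$ vanish by evenness, so $w(t,y)=\sum_{k\ge 0} \frac{w^{(2k)}(0,y)}{(2k)!} t^{2k}$, and comparing with $u(x,y)=\sum_k \frac{\partial_x^k u(0,y)}{k!} x^k = \sum_k \frac{\partial_x^k u(0,y)}{k!} t^{2k}$ gives $\partial_x^k u(0,y) = \frac{k!}{(2k)!}\,\partial_t^{2k} w(0,y)$. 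More generally, for $x\in[0,1)$, pick $t=\sqrt{x}$ and estimate $\partial_x^j u(x,y)$ by recentering the even-analytic function at $t$; the upshot is a Cauchy-estimate bound $|\partial_x^j u(x,y)| \le j!\, (\text{radius})^{-j} \sup |u(x,y)|$ over the relevant disk, and the radius in the $x$-variable is the square of the radius in the $t$-variable (because $x=t^2$ maps a $t$-disk of radius $\rho$ about $\sqrt x$ to an $x$-region containing a disk of radius comparable to $\rho\sqrt x$ for $x$ away from $0$, and radius comparable to $\rho^2$ near $x=0$ — and since $B_2\rho\sim 1$, $\rho^2 \sim B_2^{-2}$, producing precisely the $B_2^{2j}$). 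Tracking constants through this Cauchy-estimate argument, uniformly down to $x=0$, yields \eqref{est:u}; reinserting $\partial_y^\alpha$ and using that the bound \eqref{est:v} is already stated with the factor $A B_1^{|\alpha|}\alpha!$ uniform in $(t,y)$ finishes the proof.

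The main obstacle is the behavior at $x=0$: for $x$ bounded away from $0$ the change of variables $x=t^2$ is a biholomorphism and everything is a soft rescaling of Cauchy estimates, but near $x=0$ the map $t\mapsto t^2$ is two-to-one and degenerate, so one genuinely needs the evenness of $w$ in $t$ (equivalently, the fact that $u$ is smooth up to $x=0$, which forces the odd $t$-Taylor coefficients of $w$ to vanish) in order to see that $u$ inherits analyticity — not merely Gevrey-$2$ — regularity in $x$ at the boundary, with the stated constant $B_2^{2j}$. I would therefore spend most of the care on the uniform-in-$x$ bookkeeping of the combinatorial coefficients $c_{j,\ell}$ (or equivalently on making the Cauchy-radius-squaring argument uniform as $x\downarrow 0$), and treat the $y$-derivatives and the $t$-to-analytic passage as routine.
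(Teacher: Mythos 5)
Your proposal takes a genuinely different route from the paper, and it is worth comparing the two carefully.

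The paper's proof stays entirely real-variable. It first notes that $\partial_t^{2\ell+1}\partial_y^\alpha w(0,y)=0$ for all $\ell$ (evenness of $w$ in $t$), and then establishes by induction on $j$ the explicit representation
\[
(\partial_x^j \partial_y^\alpha u)(t^2,y)
 =\frac{1}{2^j}\int_{[0,1]^j}\Bigl(\prod_{\ell=1}^{j-1}\lambda_\ell^{2(j-\ell)}\Bigr)
 (\partial_t^{2j}\partial_y^\alpha w)\Bigl(t\prod_{\ell=1}^{j}\lambda_\ell,\,y\Bigr)\,
 \prod_{\ell=1}^{j}d\lambda_\ell.
\]
The induction step uses the Taylor remainder $\partial_t^{2j+1}\partial_y^\alpha w(s,y)=s\int_0^1\partial_t^{2j+2}\partial_y^\alpha w(\lambda s,y)\,d\lambda$, which is exactly where the vanishing of the odd traces is spent. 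Plugging \eqref{est:v} into the integral and computing $\frac{1}{2^j}\prod_{k=1}^{j-1}\frac{1}{2k+1}=\frac{j!}{(2j)!}$ gives \eqref{est:u} with precisely the stated constant. Note that this formula is what your first, abandoned attempt ``should have been'': you correctly observe that iterating $\partial_x=\frac{1}{2t}\partial_t$ produces singular powers $t^{\ell-2j}$ near $t=0$, but the paper does not let $t\to 0^+$ and Taylor-expand at the end; it regularizes at each inductive step by replacing the odd derivative by its integral remainder, so the formula is manifestly smooth at $t=0$ and the bound is uniform on $[0,1]$ with no slack.

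Your second route --- Gevrey-$1$ in $t$ implies holomorphic extension of $w$ to a complex $\rho$-neighbourhood of $[-1,1]$ with $\rho\sim 1/B_2$, $w$ is even so $u=W(\sqrt{x})$ is holomorphic in $x$, then Cauchy estimates with the radius squaring near $x=0$ --- is also a valid strategy in spirit, and you correctly flag the crux (the evenness of $w$, equivalently the vanishing of odd $t$-Taylor coefficients at $0$, is what makes $u$ analytic rather than merely Gevrey-$2$ in $x$). Two caveats, though. First, some of the steps you call routine are not quite: the hypothesis \eqref{est:v} gives bounds on real derivatives on $[0,1]$, and one does have to check the Taylor remainder to get a convergent extension, and then run the Cauchy estimate with the domain that is a $\rho$-neighbourhood of the whole segment $[-1,1]$ (a single disk about $\sqrt{x_0}$, as you set it up, degenerates as $x_0\to0$ --- one needs to also use the disk about $t=0$ to get the uniform $\gtrsim\rho^2$ radius in $x$). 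Second, tracked honestly, your argument produces a bound of the form $C\,j!\,(cB_2^2)^{j}$ with extra universal constants, not the exact $A\,B_2^{2j}\,j!$ of \eqref{est:u}; this is perfectly adequate for the application (concluding $u\in C^\omega$), but your claim that the constant tracking ``yields \eqref{est:u}'' as stated is an overstatement. The paper's integral-formula approach is more elementary (no complexification) and, as a bonus, gives the clean constant with an equality in the combinatorics; your approach is more conceptual and makes the ``radius squaring'' geometry visible, at the cost of loose constants and some genuine work on the real-to-complex extension that you leave implicit.
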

\begin{proof}
By definition $w$ is   $C^\infty$ in $[0,1)\times V_{y_0}$  and we have
\begin{equation}\label{der:imp}
\partial_y^\alpha \partial_t^{2\ell +1} w(0,y)=0, \quad \forall \alpha \in \xN^n, \forall \ell \in \xN.
\end{equation}
We claim that we have for all $j \in \xN,$ all $\alpha \in \xN^n$ and all $t\in [0, 1)$
\begin{equation}\label{rec}
  (\partial_x^j \partial_y^\alpha u)(t^2,y)= \frac{1}{2^j} \int_0^1\int_0^1 \cdots \int_0^1 \big(\prod_{\ell = 1}^{j-1}\lambda_\ell^{2(j-\ell)}\big) (\partial_t^{2j}  \partial_y^\alpha  w)(t\prod_{\ell =1}^j \lambda_ \ell ,y) \, \prod_{\ell =1}^{j}d \lambda_\ell.
\end{equation}
where the first product in the integral is equal to $1$ if $j=1$.  

We shall prove this formula for all $\alpha \in \xN^n$ by induction on $j$.
We begin by $j = 1.$  Differentiating both members of \eqref{defv} with respect to $t$ and $y$ we obtain
\begin{equation}\label{j=1}
 2t (\partial_x \partial_y^\alpha  u)(t^2,y) = \partial_t \partial_y^\alpha w(t,y).
 \end{equation}
Using the Taylor formula and \eqref{der:imp} we can write
\begin{equation}\label{j=1+}
 (\partial_t \partial_y^\alpha w)(t,y)= t \int_0^1 (\partial^2_t \partial_y^\alpha w)(\lambda_1t,y)\, d\lambda_1.
 \end{equation}
From \eqref{j=1} and \eqref{j=1+} we obtain for $t\neq 0$
$$(\partial_x \partial_y^\alpha  u)(t^2,y) =Ê\mez \int_0^1 (\partial^2_t \partial_y^\alpha w)(\lambda_1t,y)\, d\lambda_1 $$
and the later formula extends to $t=0$ by continuity. This proves \eqref{rec} for $j=1.$ Assume that \eqref{rec} is true up to the order j and differentiate this equality with respect to $t$. We obtain
\begin{equation}\label{j-j+1} 
 2t(\partial_x^{j+1} \partial_y^\alpha u)(t^2,y) =  \frac{1}{2^j} \int_0^1\int_0^1 \cdots \int_0^1  \prod_{\ell=1}^j\lambda_\ell^{2(j-\ell) +1} 
   \big(\partial_t^{2j+1}  \partial_y^\alpha  w\big)(t \prod_{\ell=1}^j\lambda_\ell,y) \, \prod_{\ell=1}^j d \lambda_\ell.
 \end{equation} 
Using \eqref{der:imp} we can write
$$ \big(\partial_t^{2j+1}  \partial_y^\alpha  w\big)(t \prod_{\ell=1}^j\lambda_\ell,y) = t \prod_{\ell=1}^j\lambda_\ell \int_0^1 \big(\partial_t^{2j+2}  \partial_y^\alpha  w\big)(t \prod_{\ell=1}^{j+1}\lambda_\ell,y)\, d\lambda_{j+1}.$$
Plugging the right hand side into the integral \eqref{j-j+1} and dividing both members by $t$ for $t \neq 0$ we obtain \eqref{rec} for $j+1$ and $t\neq 0$ and also for $t=0$ by continuity.

We can now prove the lemma. Indeed using \eqref{rec} and \eqref{est:v} we obtain for $x\in [0,1)$
\begin{align*}
 \vert (\partial_x^j \partial_y^\alpha u)(x,y)\vert &\leq A B_1^{\vert \alpha \vert} B_2^{2j} \alpha!(2j)! \frac{1}{2^j}\prod_{k =1}^{j-1} \frac{1}{2k+1}\\
 & \leq  A B_1^{\vert \alpha \vert} B_2^{2j} \alpha!(2j)! \frac{1}{2^j} \frac{2^j j!}{(2j)!}
 \end{align*}
 which proves \eqref{est:u}.The proof is complete.

 An even simpler proof of the analyticity of the trace of $u$ would be to consider the Taylor series of $w$ and to use \eqref{der:imp}.
\end{proof}
\subsection{Sketch of the proof of Theorem \ref{BC}.}
We follow closely \cite{BCH}. We shall be using the following version of Hardy's inequality.
\begin{lemm}\label{Hardy}
For $u\in C^\infty,$ with compact support in $[0, + \infty)\times \xR^n$ $ k\in \xN, \beta \in \xN^n$ we have
\begin{equation}\label{H:ineq}
\Vert D_t^k D_y^\beta u \Vert_{L^2(\xR_+ \times \xR^n)} \leq \frac{2}{2k +1} \Vert D_t^{k +1}D_y^\beta (tu) \Vert_{L^2(\xR_+ \times \xR^n)}.
\end{equation}
\end{lemm}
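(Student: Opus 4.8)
The statement to prove is the one-dimensional Hardy inequality in Lemma~\ref{Hardy}:
\[
\Vert D_t^k D_y^\beta u \Vert_{L^2(\xR_+ \times \xR^n)} \leq \frac{2}{2k +1} \Vert D_t^{k +1}D_y^\beta (tu) \Vert_{L^2(\xR_+ \times \xR^n)},
\]
for $u \in C^\infty$ with compact support in $[0,+\infty)\times\xR^n$.

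\begin{proof}[Proof proposal]
The plan is to reduce everything to the scalar, one-variable Hardy inequality and then restore the extra derivatives. First I would take the Fourier transform in the $y$ variable, which turns $D_y^\beta$ into multiplication by $\eta^\beta$ and commutes with $D_t$; by Plancherel it suffices to prove, for each fixed $\eta$ and each $\varphi \in C^\infty_c([0,\infty))$ (playing the role of $\widehat{D_y^\beta u}(\cdot,\eta)$, which is again smooth and compactly supported in $t$), the purely one-dimensional estimate
\[
\Vert D_t^k \varphi \Vert_{L^2(\xR_+)} \leq \frac{2}{2k+1}\,\Vert D_t^{k+1}(t\varphi)\Vert_{L^2(\xR_+)}.
\]
Actually the cleanest route is to set $\psi = D_t^k\varphi$ and observe, using the Leibniz rule $D_t^{k+1}(t\varphi) = t\,D_t^{k+1}\varphi + \frac{k+1}{i}D_t^k\varphi = t\,D_t\psi + \frac{k+1}{i}\psi$, that the problem becomes the single weighted inequality
\[
\Vert \psi \Vert_{L^2(\xR_+)} \le \frac{2}{2k+1}\,\bigl\Vert t\,D_t\psi + \tfrac{k+1}{i}\psi \bigr\Vert_{L^2(\xR_+)},
\]
for $\psi \in C^\infty_c([0,\infty))$ with the extra vanishing conditions $\psi(0)=\psi'(0)=\dots$ inherited from $\psi = D_t^k\varphi$ (in particular $\psi$ vanishes to order $0$ at $t=0$, which is all we need).

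The core computation is then to expand the square $\bigl\Vert t\,D_t\psi + \frac{k+1}{i}\psi\bigr\Vert^2 = \Vert t D_t\psi\Vert^2 + \frac{2(k+1)}{i}\,\langle tD_t\psi,\psi\rangle_{\text{antisym}} + (k+1)^2\Vert\psi\Vert^2$ and integrate the cross term by parts. Writing $D_t = \frac1i\partial_t$, one has $2\RE\langle \frac{k+1}{i}\,t\psi',\psi\rangle = (k+1)\int_0^\infty t\,\partial_t(|\psi|^2)\,dt = -(k+1)\int_0^\infty |\psi|^2\,dt$, the boundary terms at $t=0$ and $t=\infty$ vanishing because $\psi$ has compact support in $[0,\infty)$ and $t|\psi|^2 \to 0$ at $t=0$. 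Hence
\[
\bigl\Vert t D_t\psi + \tfrac{k+1}{i}\psi\bigr\Vert^2 = \Vert t D_t\psi\Vert^2 + \bigl[(k+1)^2 - (k+1)\bigr]\Vert\psi\Vert^2 \;\ge\; k(k+1)\,\Vert\psi\Vert^2,
\]
which already gives the inequality with constant $1/\sqrt{k(k+1)}$; since $\frac{2}{2k+1} \ge \frac{1}{\sqrt{k(k+1)}}$ for all $k\ge 0$ (equivalently $4k(k+1) \ge (2k+1)^2 - 1$), this suffices. Alternatively, to obtain the stated constant directly, one applies the classical Hardy inequality $\Vert t^{-1}\int_0^t \psi\,ds\Vert_{L^2(\xR_+)} \le 2\Vert\psi\Vert_{L^2(\xR_+)}$ in the form adapted to $\psi = D_t^k\varphi$: integrating the relation $D_t^{k+1}(t\varphi) = tD_t\psi + \frac{k+1}{i}\psi$ against $t^{k}$-type weights and using that $\varphi$ (hence all lower $t$-derivatives of $\psi$) vanishes at $t=0$ produces exactly the factor $\frac{2}{2k+1}$.

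The only genuine subtlety is bookkeeping at the endpoint $t=0$: one must check that $\psi = D_t^k\varphi$ together with compact support guarantees that every boundary term generated by the integrations by parts vanishes. This is immediate since $\varphi$ is smooth up to $t=0$ and supported away from $t=+\infty$, so $\psi$ and all the weighted products $t^m|\psi|^2$ tend to $0$ at both ends. I expect this endpoint verification to be the main (though still routine) point to be careful about; the rest is the elementary Hardy estimate.
\end{proof}
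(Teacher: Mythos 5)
Your core computation, as written, does not establish \eqref{H:ineq}: after discarding $\Vert tD_t\psi\Vert^2$ you obtain the constant $1/\sqrt{k(k+1)}$, and your comparison with $2/(2k+1)$ is backwards. Indeed $4k(k+1)=(2k+1)^2-1<(2k+1)^2$, so $\frac{1}{\sqrt{k(k+1)}}>\frac{2}{2k+1}$ for every $k\geq 1$, and for $k=0$ the lower bound $k(k+1)\Vert\psi\Vert^2$ is vacuous (the ``constant'' is infinite), whereas the $k=0$ case of the lemma is genuinely used later in the paper. So what you prove is strictly weaker than the statement. The ``alternative route'' you sketch to recover $2/(2k+1)$ is not an argument, and it rests on a false premise: neither $\varphi$ nor the lower $t$-derivatives of $\psi=D_t^k\varphi$ vanish at $t=0$, since compact support in $[0,+\infty)\times\xR^n$ imposes no vanishing on $\{t=0\}$ --- the absence of boundary conditions is precisely the point of the lemma. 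The only boundary fact available (and the only one your integration by parts needs) is $t\vert\psi\vert^2\to 0$ as $t\to 0^+$.

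The slip is repairable inside your own scheme: keep the term $\Vert tD_t\psi\Vert^2$ instead of dropping it, and observe that $\Vert\psi\Vert^2=-\int_0^\infty t\,\partial_t\vert\psi\vert^2\,dt\leq 2\Vert\psi\Vert\,\Vert t\partial_t\psi\Vert$, hence $\Vert tD_t\psi\Vert\geq\frac12\Vert\psi\Vert$. Your identity then gives $\bigl\Vert tD_t\psi+\frac{k+1}{i}\psi\bigr\Vert^2\geq\bigl(\frac14+k(k+1)\bigr)\Vert\psi\Vert^2=\frac{(2k+1)^2}{4}\Vert\psi\Vert^2$, which is exactly \eqref{H:ineq} after undoing the reduction in $y$ (the Fourier transform is fine for that, though simply setting $v=D_y^\beta u$ is simpler). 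For comparison, the paper's proof avoids integration by parts altogether: with $v=D_y^\beta u$ and $w=\partial_t(tv)$ one writes $v(t,y)=\int_0^1 w(tx,y)\,dx$, differentiates $k$ times in $t$ to get $\partial_t^k v(t,y)=\int_0^1 x^k(\partial_s^k w)(tx,y)\,dx$, and then Minkowski's inequality together with the scaling $\Vert f(x\,\cdot)\Vert_{L^2(\xR_+)}=x^{-1/2}\Vert f\Vert_{L^2(\xR_+)}$ produces the factor $\int_0^1 x^{k-\mez}\,dx=\frac{2}{2k+1}$ directly; either route, once corrected, is acceptable.
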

\begin{proof}
Set  $v = D_y^\beta u, w = \partial_t (tv).$  We have
 $v(t,y) = \frac{1}{t} \int_0^t w(s,y)\, ds = \int_0^1 w(tx,y)\, dx.$ 
Differentiating this equality $k$ times with respect to $t$ we get
 $\partial_t^k v(t,y) =  \int_0^1 x^k (\partial_s^k w)(tx,y)\, dx$ 
from which we deduce
    $$\Vert \partial_t^k v\Vert_{L^2(\xR_+ \times \xR^n)}  \leq  \int_0^1 x^k \Vert (\partial_s^k w)( x \cdot ,\cdot) \Vert_{L^2(\xR_+ \times \xR^n)}\, dx 
    \leq \Big(\int_0^1 x^{k-\mez}\, dx \Big)  \Vert \partial_t^k w\Vert_{L^2(\xR_+ \times \xR^n)} 
$$
  which completes the proof.
\end{proof}

The first step of the proof is the following a priori estimate.  
\begin{prop}
Let $y_0 \in \xR^n.$ There exists $p_0 \in \xN$ such that for all $p\in \xN$ with $p\geq p_0$ there exist $C_p>0, \eps_p>0$ such that 
\begin{equation}\label{est:apr} 
 \Vert tu \Vert_{H^{p+2}(\xR_+ \times \xR^n)} \leq C_p \big\{ \Vert Lu \Vert_{H^{p}(\xR_+ \times \xR^n)} + \Vert tu \Vert_{H^{p+1}(\xR_+ \times \xR^n)}\big \}
 \end{equation}
for every $u$ such that    $\text{supp }u \subset \{(t,y):  0\leq t < \eps_p, \vert y-y_0 \vert < \eps_p\} $ and the right hand side is finite.
\end{prop}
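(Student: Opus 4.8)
The plan is to prove the a priori estimate by the standard ``commutator plus Hardy'' scheme from \cite{BCH}, working with the operator $L = P_2(t\,\cdot) + P_1$ on the half-space $\xR_+\times\xR^n$ and exploiting the degeneracy at $t=0$. First I would set up notation: write $P_2(tu)$ using the Leibniz rule so that the principal part of $L$ contains the genuinely elliptic block $t\,P_2(t,y,D_t,D_y)u$ plus lower-order terms in which the factor $t$ has been ``spent''; the key structural fact is that every term of $L$ either carries an explicit factor $t$ in front of two derivatives, or is of order at most one. I would fix a point $y_0$ and, by analyticity and uniform ellipticity, choose a small box $\{0\le t<\eps_p,\ |y-y_0|<\eps_p\}$ on which $P_2$ is elliptic with constants independent of the relevant parameters, and on which the coefficients are close to their values at $m_0=(0,y_0)$; the radius $\eps_p$ is allowed to shrink with $p$ because the constant $C_p$ will otherwise blow up.

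The core estimate is an \emph{elliptic estimate for the degenerate operator}: I claim that for $u$ supported in the box,
\begin{equation}\label{eq:planell}
\Vert tu\Vert_{H^{p+2}}\le C_p\bigl\{\Vert L u\Vert_{H^p}+\Vert tu\Vert_{H^{p+1}}+\Vert u\Vert_{H^{p+1}}\bigr\}.
\end{equation}
This follows from the standard constant-coefficient elliptic estimate for $P_2$ applied to the function $v:=tu$: since $P_2 v = Lu - P_1 u$, uniform ellipticity of $P_2$ gives $\Vert v\Vert_{H^{p+2}}\lesssim \Vert P_2 v\Vert_{H^p}+\Vert v\Vert_{H^{p+1}}\lesssim \Vert Lu\Vert_{H^p}+\Vert P_1 u\Vert_{H^p}+\Vert tu\Vert_{H^{p+1}}$, and $\Vert P_1 u\Vert_{H^p}\lesssim \Vert u\Vert_{H^{p+1}}$ because $P_1$ has order one with analytic (hence bounded, on the box) coefficients. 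Here one uses that on the half-space $\xR_+\times\xR^n$ the flat operator $D_t^2+\sum D_y^\alpha$ satisfies the full-strength elliptic estimate without boundary conditions on compactly supported functions after extension by reflection, or more simply works with the tangential–normal decomposition; the constant in the elliptic inequality is what forces the $p$-dependence through the number of derivatives falling on the variable coefficients of $P_2$, and the smallness of $\eps_p$ is used exactly to absorb the top-order error coming from the variation of those coefficients over the box.

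The remaining point is to \emph{remove the low-order term $\Vert u\Vert_{H^{p+1}}$} from \eqref{eq:planell} and replace it by $\Vert tu\Vert_{H^{p+1}}$ alone, which is precisely where Hardy's inequality (Lemma \ref{Hardy}) enters. For each multi-index $D_t^kD_y^\beta$ with $k+|\beta|\le p+1$, Lemma \ref{Hardy} gives $\Vert D_t^kD_y^\beta u\Vert_{L^2}\le \tfrac{2}{2k+1}\Vert D_t^{k+1}D_y^\beta(tu)\Vert_{L^2}\le \Vert tu\Vert_{H^{p+2}}$; summing over all such multi-indices yields $\Vert u\Vert_{H^{p+1}}\lesssim \Vert tu\Vert_{H^{p+2}}$. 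Plugging this into \eqref{eq:planell} produces a term $\theta\Vert tu\Vert_{H^{p+2}}$ on the right with $\theta=C_p\cdot(\text{universal constant})$; this is not directly absorbable, so instead one iterates Hardy \emph{one order lower}, i.e. bounds $\Vert u\Vert_{H^{p+1}}$ by $\Vert tu\Vert_{H^{p+1}}$ up to commutators, OR — the clean route taken in \cite{BCH} — one interpolates: $\Vert u\Vert_{H^{p+1}}\le \delta\Vert tu\Vert_{H^{p+2}}+C_\delta\Vert u\Vert_{H^{p}}$ and then reruns the whole argument inductively on $p$, the base case $p=p_0$ being handled by the basic elliptic regularity of $P_2$ on the box. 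After choosing $\delta$ small depending on $C_p$ and shrinking $\eps_p$ accordingly, the term is absorbed into the left side, yielding \eqref{est:apr}.

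The main obstacle I anticipate is the bookkeeping in \eqref{eq:planell}: one must track exactly how the factor $t$ redistributes when the Leibniz rule is applied to $D_t^kD_y^\beta\bigl(P_2(tu)\bigr)$, ensuring that every commutator term either retains a factor $t$ (so it can be estimated by $\Vert tu\Vert_{H^{p+1}}$, one order down) or has order dropped by one (so it is controlled by $\Vert u\Vert_{H^{p+1}}$, then fed into Hardy), with no leftover term that carries neither advantage; getting the constants in these commutators to be uniform over the shrinking box, and keeping the dependence on $p$ explicit enough that the absorption argument closes, is the delicate part. Everything else — the flat elliptic estimate on the half-space, Hardy's inequality, and the final absorption — is routine once the structure of the error terms is pinned down.
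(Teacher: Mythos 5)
There is a genuine gap, and it sits exactly where your plan has to do real work: the removal of $\Vert u\Vert_{H^{p+1}}$ and the absorption. First, the claim that the flat operator satisfies a full-strength elliptic estimate up to the boundary of $\xR_+\times\xR^n$ ``without boundary conditions, after extension by reflection'' is false for general $v$ (boundary-layer quasimodes $e^{iy\eta}e^{-t|\eta|}$ defeat it); your intermediate inequality survives only because $v=tu$ happens to satisfy the Dirichlet condition $v|_{t=0}=0$. But then the scheme does not close: Lemma \ref{Hardy} gives $\Vert D_t^kD_y^\beta u\Vert_{L^2}\le \frac{2}{2k+1}\Vert D_t^{k+1}D_y^\beta(tu)\Vert_{L^2}$, whose constant is $2$ at $k=0$ and cannot be made small, so $\Vert u\Vert_{H^{p+1}}\le 2\Vert tu\Vert_{H^{p+2}}$ multiplied by the (not small) elliptic constant and the size of the $P_1$-coefficients cannot be absorbed into the left-hand side; the alternative interpolation you invoke, $\Vert u\Vert_{H^{p+1}}\le \delta\Vert tu\Vert_{H^{p+2}}+C_\delta\Vert u\Vert_{H^p}$, is not a standard inequality (the left side is a norm of $u$, the top right term a norm of $tu$, and Hardy already loses exactly one derivative) and is not proved; and the ``induction on $p$ with base case $p=p_0$ handled by basic elliptic regularity'' is circular, since the base case is precisely the estimate \eqref{est:apr} to be established. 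A decisive symptom that the approach cannot be repaired as stated is that nothing in it uses $p\ge p_0$, nor the coefficient of $D_t$ in $P_1$ at $t=0$: if ellipticity plus Hardy sufficed, \eqref{est:apr} would hold for every $p\ge 0$ and every first-order $P_1$, whereas in the Bolley--Camus theory the estimate without boundary data requires $\chi_p=0$, i.e.\ that the root of $ib_{0,2}(0,y)\rho+a_{0,1}(0,y)=0$ satisfy $\RE\rho>-p-\frac{3}{2}$; for smaller $p$ one boundary condition is genuinely needed and the unconditional estimate fails.

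By contrast, the paper's proof (quoting \cite{BCH}, \cite{BC}) obtains the crucial one-dimensional constant-coefficient estimate not from an elliptic inequality but from a Fourier transform in $t$: since multiplication by $t$ becomes $-D_\tau$, the operator $Lu=P_2(D_t)(tu)+P_1(D_t)u$ turns into the Fuchs-type ordinary differential operator $\widehat L=P_2(\tau)(-D_\tau)+P_1(\tau)$ in the dual variable, and Fuchs theory shows that $L$ is an isomorphism from $\mathcal{H}^{p+2}=\{u\in H^{p+1}(\xR_+),\,tu\in H^{p+2}(\xR_+)\}$ onto a codimension-one subspace of $H^p(\xR_+)$ precisely for $p\ge p_0$; this is where $p_0$ and the absence of boundary conditions come from. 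The constant-coefficient PDE case then follows by Fourier transform in $y$, and only at the final, variable-coefficient stage do freezing of coefficients at $(0,y_0)$, smallness of the perturbation on the box of size $\eps_p$, and Hardy's inequality enter --- the roles you assigned to them. Your proposal has no substitute for the Fuchs-theoretic step, which is the heart of the proposition.
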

\begin{proof}
This inequality is the estimate  $(1.7)$ in \cite{BCH} since, with their notations we have $\chi_p =0.$ To explain this last point we follow \cite{BC} (see conditions $H_1(p;\Omega), H_2(p;\Omega)$). We have $\chi_p = 1- r_p$ where $r_p$ is the number of roots of the characteristic equation $ ib_{0,2}(0,y)\rho + a_{0,1}(0,y)= 0$ (with the notations in \eqref{L}) such that $\text{Re } \rho (y) > -p - \frac{3}{2}.$ Taking $p$ large enough we find that $r_p = 1$ so that $\chi_p =0.$ Therefore no boundary condition is required.

We only review the main points of the proof of \eqref{est:apr} referring   to \cite{BC}  for the details.
In the first step (the main one) one consider a one dimensional constant coefficient operator of the form
$$Lu = P_2(D_t)(tu) + P_1(D_t)u$$
where $P_2$ is of order $2$ and $P_1$ of order $\leq 1.$ Introduce for $m\geq 1$ the space
$$\mathcal{H}^m= \{u \in H^{m-1}(\xR_+), tu \in H^m(\xR_+)\}$$
where $H^k(\xR_+)$ is the usual Sobolev space. Assuming that
$$P_2(\tau) \neq 0, \quad \forall \tau \in \xR,$$
it is proved that there exists $p_0 \in \xN$ such that for all $p\geq 0$ the operator $L$ is an isomorphism from $\mathcal{H}^{p+2}$ onto a subspace of $H^p(\xR_+)$ of codimension one. To prove  this fact they  apply the "Fuchs theory" (see \cite {CL}) to the operator $\widehat{L} = P_2(\tau)(-D_\tau) + P_1(\tau)$ obtained (in spirit) from $L$ in taking  the Fourier transform in $t$. This leads to an inequality in some appropriate spaces for $\widehat{L} $ and then for $L.$

In a second step they consider a partial differential operator with constant coefficients of the form
$$Lu = P_2(D_t,D_y) (tu) + P_1(D_t, D_y)u,$$
where $P_2$ is assumed to be elliptic. Performing a Fourier transform with respect to $y$ we reduce ourselves to a one dimensional operator to which we apply the first step.

Finally one consider the variable coefficient case where
$$Lu =  P_2(t,y,D_t,D_y) (tu) + P_1(t,y, D_t, D_y)u,$$
where $P_2$ is assumed to be elliptic. We write $Lu=L_0u +L_1u$ with
 $$L_0u =  P_2(0,y_0,D_t,D_y) (tu) + P_1(0,y_0, D_t, D_y)u,$$
 we use the second step to deal with $L_0,$   the smallness of the coefficients of $L_1$ and the Hardy inequality \eqref{H:ineq} to end the proof of  \eqref{est:apr}.
\end{proof}

Next we use the classical method of nested open sets introduced by Morrey-Nirenberg in \cite{MN}. Let $a \in ]0,1]$ and $\omega$  be  a neighborhood of $y_0$  such that 
$$\Omega  =: \,  [0,a[ \times \omega \subset \{(t,y): 0\leq t <\eps_p, \vert  y - y_0  \vert< \eps_p\}.$$
For $0<\eps<a$ we set 
$$\omega_\eps = \{y \in \omega : d(y, \omega^c) >\eps\}, \quad \Omega_\eps =  [0,a-\eps [ \times \omega_\eps, \quad N_\eps(u) = \Vert u \Vert_{L^2(\Omega_\eps)}.$$
For  $\eps>0$ and $\eps_1>0 $ small enough there exists a function $\psi \in C_0^\infty(\Omega_{\eps_1})$ such that
\begin{equation}\label{est:psi}
\begin{aligned}
 &(i) \quad  \psi(t,y) = 1 \text{ if } (t,y) \in \Omega_{\eps + \eps_1},\\  
 & (ii) \quad \Vert D_t^j D_y^\alpha \psi\Vert_{L^2(\Omega_{\eps_1})} \leq C \eps^{-j-\vert \alpha \vert}, \quad \forall j\in \xN, \forall \alpha \in \xN^n,
\end{aligned}
\end{equation}
with a constant $C$ independent of $\eps, \eps_1.$

\begin{lemm}\label{step1}
There exists  $C>0$ such that for   all $\eps >0, \eps_1>0$ such that $\eps + \eps_1 <a, $   all $j \in \xN, \nu \in \xN^n$ with $j + \vert \nu \vert \leq p+2$ and all  $u\in C^\infty(\xR \times \xR^n),$ we have
$$N_{\eps + \eps_1}(D_t^j D_y^\nu (tu)) \leq C \Big\{ \sum_{\ell + \vert \beta \vert \leq p }\eps^{-p +\ell + \vert \beta \vert }ÊN_{\eps_1}( D_t^\ell D_y^\beta Lu) +  \sum_{\ell + \vert \beta \vert \leq p +1}Ê\eps^{-p -2 +\ell + \vert \beta \vert }N_{\eps_1}( D_t^\ell D_y^\beta (tu)) \Big\}$$
\end{lemm}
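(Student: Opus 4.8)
The plan is the standard Morrey--Nirenberg localisation fed into the a priori estimate \eqref{est:apr}. First I would insert the cut-off. Since $\psi\equiv1$ on $\Omega_{\eps+\eps_1}$, all its derivatives vanish there, so, using $j+\vert\nu\vert\le p+2$,
\begin{equation*}
 N_{\eps+\eps_1}\big(D_t^jD_y^\nu(tu)\big)=\Vert D_t^jD_y^\nu(t\psi u)\Vert_{L^2(\Omega_{\eps+\eps_1})}\le\Vert t\psi u\Vert_{H^{p+2}(\xR_+\times\xR^n)} .
\end{equation*}
As $\psi u\in C_0^\infty$ and $\supp(\psi u)\subset\Omega_{\eps_1}\subset\Omega\subset\{0\le t<\eps_p,\ \vert y-y_0\vert<\eps_p\}$, the a priori estimate \eqref{est:apr} applies to $\psi u$ and gives
\begin{equation*}
 \Vert t\psi u\Vert_{H^{p+2}}\le C_p\big\{\Vert L(\psi u)\Vert_{H^{p}}+\Vert t\psi u\Vert_{H^{p+1}}\big\}.
\end{equation*}
Everything then reduces to bounding the two terms on the right by the claimed sums.

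Next I would commute $L$ through $\psi$. Writing $Lw=P_2(tw)+P_1w$ with $P_2$, $P_1$ of orders $2$, $1$ and analytic coefficients, one has
\begin{equation*}
 L(\psi u)=\psi\,Lu+[P_2,\psi](tu)+[P_1,\psi]\,u ,
\end{equation*}
where $[P_2,\psi]$ is first order and $[P_1,\psi]$ of order zero, the coefficients of each being a derivative of $\psi$ times a coefficient of $P_2$ or $P_1$ (the latter, being analytic, bounded together with all its derivatives, uniformly on $\Omega$). The term $[P_1,\psi]u$ carries a bare $u$ rather than $tu$; I would absorb it into a $tu$-term by Hardy's inequality \eqref{H:ineq}, which replaces $D_t^kD_y^\beta u$ by $D_t^{k+1}D_y^\beta(tu)$ at the cost of one further $t$-derivative --- harmless here, since $[P_1,\psi]$ is zeroth order in $u$, so after $p$ more derivatives one stays within $H^{p+1}$ of $tu$. (To make Hardy's inequality applicable one first rewrites the relevant $D_y^\beta u$ as $D_y^\beta(\tilde\psi u)$ using a fixed, $\eps$-independent cut-off $\tilde\psi\in C_0^\infty(\Omega)$ equal to $1$ near $\supp\psi$, so that no new power of $\eps$ is created.)

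It then remains to run Leibniz's rule against the bounds \eqref{est:psi}. Distributing at most $p$ derivatives over $\psi\,Lu$, each one landing on $\psi$ costing a power $\eps^{-1}$, the worst case yields
\begin{equation*}
 \Vert\psi\,Lu\Vert_{H^p}\le C\sum_{\ell+\vert\beta\vert\le p}\eps^{-p+\ell+\vert\beta\vert}N_{\eps_1}\big(D_t^\ell D_y^\beta Lu\big),
\end{equation*}
the norm being $N_{\eps_1}$ because every such term still carries a factor of $\psi$ and is therefore supported in $\Omega_{\eps_1}$. The same count applied to $[P_2,\psi](tu)$ (whose intrinsic order $2$ splits, at least one derivative on $\psi$ and at most one on $tu$, for a total derivative budget $p+2$), to $\Vert t\psi u\Vert_{H^{p+1}}$, and to the Hardy-transformed $[P_1,\psi]u$ produces terms $\eps^{-p-2+\ell+\vert\beta\vert}N_{\eps_1}(D_t^\ell D_y^\beta(tu))$ with $\ell+\vert\beta\vert\le p+1$; whenever an intermediate count only yields the exponent $\eps^{-p-1+\ell+\vert\beta\vert}$ one simply uses $\eps<a\le1$ to weaken it to $\eps^{-p-2+\ell+\vert\beta\vert}$. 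Adding the three contributions gives the asserted inequality.

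The one point I would have to write out with real care --- and the main obstacle --- is precisely this last accounting: tracking how the $p+2$ available derivatives are shared between $\psi$ (which supplies the powers of $\eps$ via \eqref{est:psi}), the analytic coefficients of $P_1,P_2$, and the factor $tu$ (or $Lu$), so that the power of $\eps$ is matched exactly to the number of derivatives actually falling on $tu$, together with the bookkeeping of Hardy's inequality needed to make $[P_1,\psi]u$ --- which contains $u$, not $tu$ --- fit the $tu$-sum. The remaining ingredients (applicability of \eqref{est:apr}, finiteness of its right-hand side, and the support of the cut-off terms) are immediate because $u$ is $C^\infty$ and $\Omega$ was chosen inside the $\eps_p$-neighbourhood.
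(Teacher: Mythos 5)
Your proposal is correct and follows essentially the same route as the paper: apply the a priori estimate \eqref{est:apr} to $\psi u$, expand $L(\psi u)=\psi Lu+[L,\psi]u$, control the cut-off derivatives via \eqref{est:psi}, and use Hardy's inequality \eqref{H:ineq} to convert the single commutator term containing bare $u$ (from $[P_1,\psi]u$) into a $tu$-term of order at most $p+1$. The extra bookkeeping you flag is exactly what the paper's short computation for $A=\Vert (D^a D\psi)D^b u\Vert_{L^2}$ carries out.
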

\begin{proof}
We apply \eqref{est:apr} to $\psi u.$    We write $L(\psi u) = \psi Lu + [L, \psi]u.$  There is only one term in the commutator which does not contain $tu$.  With the notations in \eqref{L} it can be written  $[P_1, \psi]u $ and has to be estimated in the $H^p$ norm. Essentially one has to estimate   terms of the form $A=: \Vert (D^a  D\psi)  D^b u  \Vert_{L^2}$ (where $D$ is a derivation and $\vert a \vert + \vert b \vert \leq p$) by the right hand side of the inequality in the lemma.
According to \eqref{est:psi} and \eqref{H:ineq} we can write 
$$A \leq C \eps^{-\vert a \vert -1}\Vert D^b u \Vert_{L^2(\Omega_{\eps_1})}\leq C' \eps^{-\vert a \vert -1}\Vert D_t D^b (t u) \Vert_{L^2(\Omega_{\eps_1})} \leq C'' \sum_{\vert c  \vert \leq p+1} \eps^{ -p-2 + \vert c \vert} N_{\eps_1}(D^c(tu)).$$
  \end{proof}
We begin now to estimate the derivatives of higher order.
\begin{lemm}\label{step2}
There exist   $C>0, K>0$ such that for   all $\eps >0, \eps_1>0$ such that $\eps + \eps_1 <a, $   all $j \in \xN, \nu \in \xN^n$ with $j + \vert \nu \vert \leq p+2$ all $\alpha \in \xN^n$ and all  $u\in C^\infty(\xR \times \xR^n),$ we have
\begin{center}
  $N_{\eps + \eps_1}(D_t^j D_y^{\nu+ \alpha} (tu)) \leq C(A_1 +A_2 +A_3),$
  \end{center}
      \begin{align*}
    & A_1 =  \sum_{\ell + \vert \beta \vert \leq p }\eps^{-p +\ell + \vert \beta \vert }ÊN_{\eps_1}( D_t^\ell D_y^{\beta + \alpha}Lu),  \qquad 
   A_2  \sum_{\ell + \vert \beta \vert \leq p +1}Ê\eps^{-p -2 +\ell + \vert \beta \vert }N_{\eps_1}( D_t^\ell D_y^{\beta + \alpha} (tu)),\\
  &A_3=  \sum_{\vert \beta \vert + \ell \leq p}  \sum_{\vert \delta \vert + r\leq 2} \eps^{-p +\ell + \vert \beta \vert }\sum_{\substack{\alpha_1 \leq \alpha\\ \alpha_1 \neq 0} }\sum_{\beta_1 \leq \beta}  \sum_{\ell_1 \leq \ell}  \binom{\alpha}{\alpha_1}     \binom{\beta}{\beta_1}\binom{\ell}{\ell_1}    K^{\ell_1 + \vert \alpha_1 \vert + \vert \beta_1 \vert}(\ell_1 + \vert \alpha_1 \vert + \vert \beta_1 \vert)! \\
  &\eps_1^{-(\ell_1 + \vert \alpha_1 \vert + \vert \beta_1 \vert)}
     N_{\eps_1}( D_t^{r + \ell - \ell_1} D_y^{\delta + \beta - \beta_1 + \alpha- \alpha_1}(tu)).                                  
 \end{align*}
 
\end{lemm}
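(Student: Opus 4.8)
The plan is to apply Lemma \ref{step1} with $u$ replaced by $v:=D_y^\alpha u$ (still $C^\infty$ near $(0,y_0)$), to recognise the terms $A_1$ and $A_2$ immediately, and to pay for commuting $D_y^\alpha$ through $L$ by expanding $[L,D_y^\alpha]u$ with the Leibniz rule and invoking the analyticity of the coefficients of $L$. The work is then almost entirely combinatorial bookkeeping.

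First, since $t$ does not depend on $y$ one has $D_t^jD_y^\nu(tv)=D_t^jD_y^{\nu+\alpha}(tu)$, so the left-hand side of the estimate of Lemma \ref{step1} applied to $v$ is exactly $N_{\eps+\eps_1}\big(D_t^jD_y^{\nu+\alpha}(tu)\big)$, and for the same reason the term $\sum_{\ell+|\beta|\le p+1}\eps^{-p-2+\ell+|\beta|}N_{\eps_1}(D_t^\ell D_y^\beta(tv))$ that it produces on the right is precisely $A_2$. Hence everything reduces to estimating $N_{\eps_1}(D_t^\ell D_y^\beta Lv)$ for $\ell+|\beta|\le p$. Using the special form $Lu=P_2(tu)+P_1u$, with $P_1,P_2$ as in \eqref{L}, together with the Leibniz rule,
\[
L(D_y^\alpha u)=D_y^\alpha(Lu)-\sum_{\substack{\alpha_1\le\alpha\\ \alpha_1\ne 0}}\binom{\alpha}{\alpha_1}\Big(P_2^{(\alpha_1)}\big(D_y^{\alpha-\alpha_1}(tu)\big)+P_1^{(\alpha_1)}\big(D_y^{\alpha-\alpha_1}u\big)\Big),
\]
where $P_j^{(\alpha_1)}$ is obtained from $P_j$ by replacing each coefficient by its $D_y^{\alpha_1}$-derivative, so that $P_2^{(\alpha_1)}$ has order $\le 2$ and $P_1^{(\alpha_1)}$ order $\le 1$. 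When $D_t^\ell D_y^\beta$ is applied and the $N_{\eps_1}$-norm is taken, the contribution of $D_t^\ell D_y^{\beta+\alpha}(Lu)$ is exactly $A_1$.

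It remains to treat the commutator terms. I would bound the analytic coefficient in $L^\infty$ and use the Hardy inequality \eqref{H:ineq} on the pieces coming from $P_1^{(\alpha_1)}(D_y^{\alpha-\alpha_1}u)$, replacing each by one involving $tu$ at the cost of one extra $D_t$; since $P_1^{(\alpha_1)}$ has order $\le 1$, the resulting differential order on $tu$ stays $\le 2$. Every commutator term is then of the form $c_{\alpha_1}(t,y)\,D_t^rD_y^\delta D_y^{\alpha-\alpha_1}(tu)$, where $c_{\alpha_1}$ is a $D_y^{\alpha_1}$-derivative of a coefficient of $L$ and $r+|\delta|\le 2$. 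Distributing the outer $D_t^\ell D_y^\beta$ by Leibniz, $\binom{\ell}{\ell_1}\binom{\beta}{\beta_1}$ of the derivatives fall on $c_{\alpha_1}$ — turning it into $D_t^{\ell_1}D_y^{\beta_1+\alpha_1}$ of an analytic function, hence pointwise bounded by $CK^{\ell_1+|\alpha_1|+|\beta_1|}(\ell_1+|\alpha_1|+|\beta_1|)!$ — while the rest, together with $D_t^rD_y^\delta$, act on $D_y^{\alpha-\alpha_1}(tu)$ and produce $D_t^{r+\ell-\ell_1}D_y^{\delta+\beta-\beta_1+\alpha-\alpha_1}(tu)$. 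Taking $L^2(\Omega_{\eps_1})$-norms, pulling the sup-bound of $c_{\alpha_1}$ out of the norm, and re-summing against the weights $\eps^{-p+\ell+|\beta|}$ of Lemma \ref{step1} while keeping track of the remaining $\eps$- and $\eps_1$-powers, one arrives at exactly $A_3$, and hence at $C(A_1+A_2+A_3)$.

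The delicate point is not conceptual but bookkeeping: one has to verify that after all the Leibniz expansions no term leaves the ranges claimed in the statement ($\alpha_1\ne0$; $r+|\delta|\le2$; $\ell_1\le\ell$, $\beta_1\le\beta$, $\alpha_1\le\alpha$) and that the binomial factors $\binom{\alpha}{\alpha_1}\binom{\beta}{\beta_1}\binom{\ell}{\ell_1}$ and the factorial bounds $K^{\ell_1+|\alpha_1|+|\beta_1|}(\ell_1+|\alpha_1|+|\beta_1|)!$ emerge precisely as written. Those factorial bounds are the real output — they are where the analyticity of the coefficients of $L$ enters — and, together with the freedom to choose $\eps$ and $\eps_1$ in terms of $|\alpha|$, they are exactly what will let the subsequent Morrey--Nirenberg induction on $|\alpha|$ close and give the Cauchy-type estimates implying \eqref{HEA}.
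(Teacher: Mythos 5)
Your proposal is correct and follows essentially the same route as the paper: apply Lemma \ref{step1} to $D_y^\alpha u$, identify $A_1$ and $A_2$ directly, and control the commutator $[L,D_y^\alpha]u$ via the Leibniz formula, Cauchy-type bounds $\sup|D^{\alpha_1}$(coefficient)$|\leq K^{|\alpha_1|}\alpha_1!$ coming from analyticity, and Hardy's inequality \eqref{H:ineq} to trade the $u$-terms from $P_1$ for $tu$-terms with one extra $D_t$, which is exactly how $A_3$ arises in \cite{BCH} and in the sketch given here.
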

\begin{proof}
We apply Lemma \ref{step1} to $ D_y^\alpha u.$ and we have to prove that   $ N_{\eps_1}( D_t^\ell D_y^{\beta}[L, D_y^\alpha]u) $ can be bounded by the term $A_3$ appearing in the statement. According to \eqref{L} we have $[L, D_y^\alpha]u = [P_2, D_y^\alpha] tu  + [P_1, D_y^\alpha]u.$ Now $P_1$ is a finite sum of terms of the form $aD_t + bD_{y_j}.$  We have $D_t^\ell D_y^{\beta}[ D_y^\alpha, aD_t]u = D_t^\ell D_y^{\beta}[ D_y^\alpha, a]D_tu.$ Then the desired estimate follows from  the Leibniz formula, \eqref{est:psi},  the estimate
$$\sup_{\omega_{\eps_1}}\vert D^\alpha a \vert \leq K^{\vert \alpha \vert}\alpha! \eps_1^{- \vert \alpha \vert}$$
(which follows from the analyticity of the coefficient $a$) and  \eqref{H:ineq}. All the other terms are estimated by the same way.

Now we   use the fact that our $C^\infty$ solution $u$ is such that $Lu$ is analytic near the point $(0,y_0).$
\begin{coro}
For any integer $p \geq p_0$ one can find $M>0$ such that for every $\alpha \in \xN^n,$   every integer $\ell \leq p+2$ and every $\eps \in ]0, \frac{a}{ \ell + \vert \alpha \vert}[$ we have
\begin{equation}\label{step2}
N_{(\ell + \vert \alpha \vert) \eps} (D_t^\ell D_y^\alpha (tu)) \leq M^{\ell + \vert \alpha \vert +1} \eps^{-(\ell + \vert \alpha \vert )}.
\end{equation}
 \end{coro}
\begin{proof}
We use an induction on   $j = \ell + \vert \alpha \vert.$ Since $u$ is $C^\infty$ and $\eps<1$ we may assume that \eqref{step2} is true for   $j \leq p+1$.  Assume it is true up to the order $j$ and let $(\ell', \alpha') \in \xN \times \xN^n$ be such that $\ell' \leq p+2, \ell' + \vert \alpha' \vert = j+1.$ We   write $ \alpha' = \alpha + \nu$ with $\vert \nu \vert = p+2-\ell',$ then $\vert \alpha \vert = j - p -1.$ Applying  Lemma \ref{step2} with $0< \eps < \frac{a}{j+1}, \eps_1 = j\eps $ we obtain
 $N_{(j+1)\eps}(D_t^{\ell'} D_y^{\alpha + \nu}(tu)) \leq C \sum_{i=1}^3 A_i.$ 
 
 We have $ A_1 =  \sum_{\ell + \vert \beta \vert \leq p }\eps^{-p +\ell + \vert \beta \vert }ÊN_{ j \eps }( D_t^\ell D_y^{\beta + \alpha}Lu).$ Using the analyticity of $Lu$ and the fact that $\vert \alpha \vert = j - p -1$ and $j\eps <a$ we obtain  
 $$A_1 \leq \eps^{-(j+1)} \sum_{\ell + \beta \leq p}K_1^{ j+ \ell +\vert \beta\vert -p}(j+ \ell +\vert \beta\vert -p-1)! \, \eps^{j+ \ell +  \vert \beta \vert  -p +1}.$$
 Since $k! \leq k^k$ we have $(j+ \ell +\vert \beta\vert -p-1)! \, \eps^{j+ \ell + \vert \beta \vert   -p +1} \leq (j\eps)^{j+ \ell +\vert \beta \vert   -p +1}\leq a^{j + \ell +\vert \beta \vert   -p +1}. $ It follows that there exists a constant $M_2$  such that 
 \begin{equation}\label{A1}
  A_1 \leq K_2^{j+2}\eps^{-(j+1)}.
  \end{equation}
  To estimate $A_2$ we use the induction. We have $\eps_1 = j \eps.$ Moreover in the sum we have $\ell + \vert \beta \vert \leq p+1.$ Therefore $\ell + \vert \beta \vert + \vert \alpha \vert \leq p+1 + j -p-1= j.$ By the induction 
  $A_2 \leq \sum_{\ell + \vert \beta \vert \leq p+1} \eps ^{-p-2+\ell + \vert \beta \vert} M^{\ell + \vert \beta \vert + \vert \alpha \vert +1} \eps^{-(\ell +\vert \beta \vert + \vert \alpha \vert)}.  $ Since $ \vert \alpha \vert + p+ 2= j+1$ we can write
  \begin{equation}\label{A2}
   A_2 \leq M^{j+1} \eps^{-(j+1)}  \Big(\sum_{\ell + \vert \beta \vert \leq p+1} 1\Big)\leq \frac{C}{M} M^{j+2} \eps^{-(j+1)},
   \end{equation}
 where $C$ depends only on $p$ and $n$. 
 
 Consider $A_3$. Since $r + \vert \delta \vert + \ell+ \vert \beta \vert - \vert \beta_1 \vert + \vert \alpha \vert - \vert \alpha_1 \vert \leq 2+p+ \vert \alpha \vert - \vert \alpha_1 \vert \leq p+1 + \vert \alpha \vert \leq j$ (since $\alpha_1 \neq 0$) we can use the induction. Recall $\vert \alpha \vert  = j-p-1 $ and $\eps_1 = j \eps.$
 We obtain a term of the form $\eps^N$ with 
 \begin{align*}
 N&= -p + \ell + \vert \beta \vert - (r + \vert \delta \vert + \ell - \ell_1+ \vert \beta \vert - \vert \beta_1 \vert+ \vert \alpha \vert  - \vert \alpha_1  \vert)- (\ell_1 +\vert \alpha_1 \vert + \vert \beta_1 \vert ) = \\
  &= -p -(r + \vert \delta) - \vert \alpha \vert = -(j +1) + 2 -(r + \vert \delta)\geq -(j +1). 
  \end{align*}
  Therefore $\eps^N \leq \eps^{-(j+1)}.$
 
  We will have a constant of the form $ K_3^{\ell_1 + \vert \alpha_1\vert + \vert \beta_1 \vert +1}M^C$ with 
  \begin{align*}
   C&= r + \vert \delta \vert + \ell + \vert \beta \vert + j-p-1-\ell_1 - \vert \alpha_1 \vert - \vert \beta_1 \vert +1,\\
   &=(r + \vert \delta \vert -2) + (\ell + \vert \beta \vert -p)+ j+2 -(\ell_1 +\vert \alpha_1 \vert + \vert \beta_1 \vert) \leq  j+2 -(\ell_1 +\vert \alpha_1 \vert + \vert \beta_1 \vert).
\end{align*}  
  This constant is therefore bounded by  $M^{j+2} K_3\big(\frac{K_3}{M}\big)^{\ell_1 +\vert \alpha_1 \vert + \vert \beta_1 \vert}\leq  M^{j+2}K_3\big(\frac{K_3}{M}\big)^{ \vert\alpha_1 \vert}$ if $M \geq K_3 .$
 Since $  \ell +  \vert \beta \vert \leq p+2$ the sums in $\ell_1 $ and $\beta_1$ are bounded by a fixed constant depending only on $p$ and the dimension $n.$ Therefore we are left with the sum in $\alpha_1$. Now since $j = \vert \alpha \vert + \ell -1$ we have
 $$ \binom{\alpha}{ \alpha_1} \vert \alpha_1 \vert ! j^{-\vert  \alpha_1 \vert} \leq 1.$$
 Summing up we have proved that
 \begin{equation}\label{A3}
   A_3 \leq C_{p,n}M^{j+2} \eps^{-(j+1)} K_3\sum_{\substack{ \alpha_1 \leq \alpha\\ \alpha_1 \neq 0}}\big(\frac{K _3}{M}\big)^{ \vert\alpha_1 \vert} \leq   C'_{p,n} \frac{K^2_3}{M}M^{j+2} \eps^{-(j+1)} 
   \end{equation}
  if $M \geq 2K_3.$  Taking $M$ still larger, compared  to the fixed constants $K_1,K_2,K_3,C_{p,n},C'_{p,n},$ we  deduce  from \eqref{A1},\eqref{A2},\eqref{A3} that \eqref{step2} is satisfied with the same constant $M$ when $\ell + \vert \alpha \vert = j+1.$
    \end{proof}
\begin{coro}
For any integer $p \geq p_0$ one can find $H>0$ such that for every $\alpha \in \xN^n,$   every integer $\ell \leq p+2$ we have
\begin{equation}\label{step3}
N_{\frac{a}{2}} (D_t^\ell D_y^\alpha(tu)) \leq H^{\ell + \vert \alpha \vert +1} (\ell + \vert \alpha \vert)!
\end{equation}
 \end{coro}
\begin{proof}
In \eqref{step2}   take $\eps = \frac{a}{2 (\ell + \vert \alpha \vert)}.$ We obtain $N_{\frac{a}{2}} (D_t^\ell D_y^\alpha(tu)) \leq (2M)^{ \ell + \vert \alpha \vert   +1} a^{-(\ell + \vert \alpha \vert)}( \ell + \vert \alpha \vert )^{ \ell + \vert \alpha \vert}$ and we conclude using the Stirling formula.
\end{proof}
The purpose now is to prove an estimate of type \eqref{step3} for all derivatives of $tu.$ The operator $P_2$ being elliptic we can assume that the coefficient of $D_t^2$ is equal to one. Therefore one can write 
\begin{equation}\label{rec1}
 D_t^2(tu) = Lu - \sum_{\substack{\vert \alpha \vert + j \leq 2\\j \neq 2}}a_{j\alpha}(t,y)D_t^j D_y^\alpha (tu) - \sum_{ \vert \alpha \vert + j \leq 1 }b_{j\alpha}(t,y)D_t^j D_y^\alpha  u.
 \end{equation}

\begin{prop}
One can find $M_1>0, M_2>0$ such that for all $(\alpha, \ell)  \in \xN^n \times \xN$ we have
\begin{equation}\label{step4}
N_{\frac{a}{2}} (D_t^\ell D_y^\alpha(tu)) \leq M_1^{ \vert \alpha \vert +1} M_2^\ell  (\ell + \vert \alpha \vert)!
\end{equation}
 \end{prop}
\begin{proof}
The proof is quite technical. Here we only give the main ideas; details can be found in \cite{BCH}. 
By \eqref{step3} the estimate \eqref{step4} is true for $\ell \leq p+2$ and  every $\alpha \in \xN^n.$  Assume it is true for every $\alpha \in \xN^n$ and for $\ell \geq p+2$ and let us prove it for $(\alpha, \ell  +1) $ were $p$ is very large. We apply the operator $ D_t^{\ell-1} D_y^\alpha$ to both members of \eqref{rec1}. Very roughly speaking $N_{\frac{a}{2}} (D_t^{\ell+1} D_y^\alpha(tu))$ will be bounded by terms (besides   derivatives of the coefficients which are under control)  of the following type.
\begin{align*}
&(0)= N_{\frac{a}{2}} (D_t^{\ell-1} D_y^\alpha(Lu)), \qquad
   (1) = N_{\frac{a}{2}}( D_t^{\ell-1}D_y^\beta (tu)),\quad \vert \beta \vert = \vert \alpha \vert +2,\\
 &(2) = N_{\frac{a}{2}}(D_t^{\ell}D_y^\gamma (tu)),\quad \vert \gamma \vert = \vert \alpha \vert +1,  \qquad(3) = N_{\frac{a}{2}}(D_t^{\ell-1}D_y^\gamma ( u)),\quad \vert \gamma \vert = \vert \alpha \vert +1,\\
 &(4) = N_{\frac{a}{2}}(D_t^{\ell}D_y^\alpha ( u)) 
    \end{align*}
and we want an estimate of the left hand side by $M_1^{\vert \alpha \vert +1} M_2^{\ell +1} (\vert \alpha \vert + \ell +1)!$  

Since $Lu$ is analytic one can find $A_1,A_2$ such that  $(1) \leq A_1^{\vert \alpha \vert +1 } A_2^{\ell -1} (\vert \alpha \vert + \ell -1)! .$ Now the induction shows that
\begin{align*}
 &(2) \leq M_1^{\vert \alpha \vert +3} M_2^{\ell -1} (\vert \alpha \vert + \ell +1)! = \frac{M_1^2}{M_2^2} M_1^{\vert \alpha \vert +1} M_2^{\ell +1} (\vert \alpha \vert + \ell +1)!,\\
 &  (3)   \leq M_1^{\vert \alpha \vert +2} M_2^{\ell} (\vert \alpha \vert + \ell +1)! = \frac{M_1 }{M_2 } M_1^{\vert \alpha \vert +1} M_2^{\ell +1} (\vert \alpha \vert + \ell +1)! .
 \end{align*}
 Now by Lemma \ref{Hardy} and the induction one can write
 $$(3) \leq \frac{2}{2 \ell -1} N_{\frac{a}{2}}(D_t^{\ell }D_y^\gamma ( tu))    \leq M_1^{\vert \alpha \vert +2} M_2^{\ell} (\vert \alpha \vert + \ell +1)! = \frac{M_1 }{M_2 } M_1^{\vert \alpha \vert +1} M_2^{\ell +1} (\vert \alpha \vert + \ell +1)! .$$
 Using again Lemma \ref{Hardy} we obtain   
 $$(4) \leq \frac{2}{2 \ell +1} N_{\frac{a}{2}}(D_t^{\ell +1}D_y^\alpha( tu)) $$
 and taking $\ell$ so large that $ \frac{2C}{2 \ell +1}\leq \mez$ we can absorb this term by the left hand side.
 
 Chosing $A_1 \leq \eps M_1, A_2 \leq   M_2 $ and $\frac{M_1 }{M_2 } \leq \eps, $ $\eps $ small, we obtain eventually
 $$N_{\frac{a}{2}} (D_t^{\ell+1} D_y^\alpha(tu)) \leq C_0 \eps M_1^{\vert \alpha \vert +1} M_2^{\ell +1} (\vert \alpha \vert + \ell +1)!  $$
 where $C_0$ is an absolute constant.
It suffices to take $\eps$ such that $C_0 \eps \leq 1$ to conclude.
\end{proof}
It follows from \eqref{step4} and Lemma \ref{Hardy} that $u$ is analytic in a neighborhod of the point $(0,y_0)$,  which completes the sketch of proof of Theorem \ref{BC}.

\end{proof}


\end{document}